\let\oldtocsection=\tocsection
\let\oldtocsubsection=\tocsubsection
\let\oldtocsubsubsection=\tocsubsubsection
\renewcommand{\tocsection}[2]{\hspace{0pt}\oldtocsection{#1}{#2}}
\renewcommand{\tocsubsection}[2]{\hspace{16pt} \oldtocsubsection{#1}{#2} }
\renewcommand{\tocsubsubsection}[2]{\hspace{32pt}\oldtocsubsubsection{#1}{#2}}
\def\BState{\State\hskip-\ALG@thistlm}
\newcommand{\cont}{\supseteq}
\newcommand{\N}{{\mathbb N}}
\newcommand{\Z}{{\mathbb Z}}
\newcommand{\R}{{\mathbb R}}
\newcommand{\C}{{\mathbb C}}
\newcommand{\F}{{\mathbb F}}
\newcommand{\Q}{{\mathbb Q}}
\renewcommand{\cont}[1]{\mathrm{cont}\left( #1 \right)}
\newcommand{\pp}[1]{\mathrm{pp}\left( #1 \right)}
\newcommand{\lc}[1]{\mathrm{lc}\left( #1 \right)}
\newcommand{\sgn}[1]{\mathrm{sgn} \, #1 }
\newcommand{\res}[1]{\mathrm{res}\big( #1 \big)}
\renewcommand{\gcd}{{\mathbf{gcd}}}
\newtheorem{Theorem}{\sc Theorem}[section]
\newtheorem{Lemma}[Theorem]{\sc Lemma}
\newtheorem{Corollary}[Theorem]{\sc Corollary}  
\theoremstyle{definition}
\newtheorem{Example}[Theorem]{\sc Example}
\theoremstyle{remark}
\newtheorem{Remark}[Theorem]{\sc Remark}
\newtheorem{Algorithm}{\sc Algorithm}[section]
\begin{document}


\subjclass{20E10, 20E15, 20E22, 20F16}
\keywords{Big prime modular gcd algorithm, Euclidean rings, Euclidean algorithm, UFD, greatest common divisor, algorithm, computer algebra, ring theory, modular rings, finite fields.}

\title[On degrees of modular common divisors]{On degrees of modular common divisors and \\ the Big prime gcd algorithm}
\author[Vahagn H.~Mikaelian]{{ Vahagn H.~Mikaelian \\ E\lowercase{-mail: v.mikaelian@gmail.com} }}
\thanks{The author was supported in part by SCS RA, joint Armenian-Russian research project 13RF-030 and by State Committee Science MES RA grant in frame of project 13-1A246.}
\date{\today}

\begin{abstract}
We consider a few modifications of the Big prime modular $\gcd$ algorithm for polynomials in $\Z[x]$. Our modifications are based on bounds of degrees of modular common divisors of polynomials, on estimates of the number of prime divisors of a resultant and on finding preliminary bounds on degrees of common divisors using auxiliary primes. These modifications are used to suggest improved algorithms for  $\gcd$ calculation and for coprime polynomials detection. To illustrate the ideas we apply the constructed algorithms on certain polynomials, in particular, on polynomials from Knuth's example of intermediate expression swell. 
\end{abstract}

\maketitle

{\footnotesize
\parskip0.5mm
\tableofcontents
}

\section{Introduction}
\label{Introduction}

\noindent
This work is one of the articles in which we would like to present parts from new Introduction to computer algebra~\cite{ImGirk}, that currently is under preparation. 
In \cite{ImGirk} we try to give a ``more algebraic'' and detailed view on some of the areas of computer algebra, such as, algorithms on Euclidean rings, extensions of fields, operators in spaces on finite fields, factorization in UFD's, etc..


The Big prime modular $\gcd$ algorithm is one of the first and most popular algorithms of computer algebra. In its classical form it allows to calculate the greatest common divisor $\gcd\big(f(x), g(x)\big)$ for any non-zero polynomials $f(x), g(x) \in \Z [x]$. There are a few modifications of this algorithm for other UFD's, such as multivariate polynomial rings. Attention to the $\gcd$ calculation is partially explained by the first examples that were built to explain importance of application of algebraic methods to computer science. In particular, Knuth's well known example of intermediate expression swell discusses the polynomials
\begin{equation}
\label{Knuth}
\begin{array}{ll} 
\,\,\, f(x) &\!\!\!  = x^8+x^6-3x^4-3x^3+8x^2+2x-5, \\ 
\,\,\, g(x) &\!\!\!  = 3x^6+5x^4-4x^2-9x+21, 
\end{array}
\end{equation}
and it shows that calculation of $\gcd\big(f(x), g(x)\big)$ by traditional Euclidean algorithm on rational numbers generates very large integers to deal with, whereas consideration of these polynomials modulo $p$, that is, consideration of their images under ring homomorphism $\varphi_p : \Z[x] \to \Z_p[x]$ (where $\Z_p [x]$ is the polynomial ring over the residue ring $\Z_p \cong \Z / p\Z$) very easily shows that $\gcd\big(f(x), g(x)\big)=1$ (see~\cite{Knuth 2} and also \cite{Brown, MCA, Davenport S T, Pankratev}). We are going to use the polynomials (\ref{Knuth}) as examples below to apply the algorithms below (see examples~\ref{apply BPA on Knuth}, \ref{apply bound to Knuth}, \ref{enough is 31}, \ref{Knuth for p=2}, \ref{apply modified on Knuth},  \ref{apply plus on Knuth}).

The main idea of the Big prime modular $\gcd$ algorithm is that for the given polynomials  $f(x), g(x) \in \Z [x]$ one may first consider their images $f_p(x)=\varphi_p(f(x)), g_p(x)=\varphi_p(g(x)) \in \Z_p [x]$ under  $\varphi_p$. Unlike $\Z [x]$, the ring $\Z_p [x]$ is an Euclidean domain, since it is a polynomial ring over a field, so the  $\gcd \big(f_p(x), g_p(x) \big)$ can be computed in it by the well known Euclidean algorithm. There remains ``to lift'' a certain fold $t \cdot \gcd \big(f_p(x), g_p(x) \big)$ of it to the ring $\Z [x]$ to reconstruct the pre-image $\gcd \big(f(x), g(x) \big)$. The ``lifting'' procedure consists of selecting the suitable value for prime $p$, then finding in $\Z [x]$ an appropriate pre-image for $\gcd \big(f_p(x), g_p(x) \big)$, then checking if that pre-image divides both $f(x)$ and $g(x)$. If yes, it is the $\gcd \big(f(x), g(x)\big)$ we are looking for. If not, then a new $p$ need be selected to repeat the process. Arguments based on resultants and on Landau-Mignotte bounds show that we can effectively choose $p$ such that the number of required repetitions is ``small''.

The first aim of this work is to present in Sections~\ref{degrees}--\ref{The big prime algorithm} a slightly modified argumentation of the algorithm, based on comparison of the degrees of common divisors of $f(x)$ and $g(x)$ in $\Z [x]$, and of $f_p(x)$ and $g_p(x)$ in $\Z_p [x]$ (see Algorithm~\ref{BPN}). This approach allows some simplification of a step of the algorithm: for some primes $p$ we need not reconstruct the pre-image of $t \cdot \gcd \big(f_p(x), g_p(x) \big)$, but we immediately get an indication that this prime is not suitable, and we should proceed to a new $p$ (see Remark~\ref{my remark}).

Then in Section~\ref{estimating the resultant} we discuss the problem if or not the Big prime modular $\gcd$ algorithm could output the correct answer using just one prime $p$. The answer is positive, but for some reasons it should not be used to improve the algorithm (to make it work with one $p$) because it evolves a too large prime (see Remark~\ref{why not to chose}). Instead, we show that we can estimate the maximal number of $p$'s (repetitions of steps) that may be used in traditional Big prime modular $\gcd$ algorithm. For example, for the polynomials (\ref{Knuth}) of Knuth's example this number is at most 31. Estimates of this type can be found in literature elsewhere. We just make the bound considerably smaller (see Remark~\ref{Let us stress}).

The obtained bounds on the number of primes $p$ are especially effective when we are interested not in $\gcd$, but just in detection if the polynomials $f(x), g(x) \in \Z [x]$ are coprime or not. We consider this in Section~\ref{coprime polynomials} (see Algorithm~\ref{coprime algoritm}).

In Section~\ref{other algorithms} we consider four other ideas to modify the Big prime modular $\gcd$ algorithm. Two first ideas are based on checking the number of primes $p$. The third idea is based on using an auxiliary prime $q$ to estimate the degree of $\gcd \big(f(x), g(x)\big)$ by means of the degree of  $\gcd \big(f_q(x), g_q(x) \big)$ (see Algorithm~\ref{BPN-e}). Example~\ref{apply modified on Knuth} shows how much better results we may get by this modification. The fourth idea combines both approaches: it uses a set of auxiliary primes $q_1, \ldots, q_{k+1}$ to correctly find the degree of $\gcd \big(f(x), g(x)\big)$, and then we use a modified version of Landau-Mignotte bound to find a single big prime $p$ by which we can calculate the $\gcd \big(f_p(x), g_p(x) \big)$.

The arguments used here can be generalized for the case of polynomials on general UFD's. From the unique factorization in a UFD it easily follows, that $\gcd$ always exists, and it is easy to detect if or not the given common divisor of maximal degree is a $\gcd$ or not. The less simple part is to find ways to compute $\gcd$ (without having the prime-power factorization). That can be done for some classes of UFD's, such as, multivariate polynomials on fields.
%
%
The case of general UFD's will be considered later~\cite{General UFD's}.

\section{The $\gcd$ in polynomial rings and the degrees of common divisors}
\label{degrees}

The problem of finding the greatest common divisor $\gcd(a, b)$ of any non-zero elements $a$, $b$  in a ring $R$ can be separated to two tasks: 
\begin{enumerate}
\item finding out if $\gcd(a, b)$, in general, {\it  exists} for $a, b \in R$; and then:
\item finding an {\it effective} way to calculate the $\gcd(a, b)$.
\end{enumerate}
  
The Euclidean algorithm gives an easy answer to both of these tasks in any Euclidean domain, that is, an integrity domain $R$ possessing {\it Euclidean norm} 
$
\delta: R \backslash {0}\to \N \cup \{0\},  
$
such that $\delta(ab)\ge \delta(a)$ hold for any non-zero elements $a, b \in R$; and for any  $a, b \in R$, where $b \not= 0$, there exists elements $q, r \in R$, such that $a=qb+r$, where either $r=0$, or $r\not=0$ and $\delta(r) < \delta(b)$ \cite{Lang, Garrett, Introduction to ring theory, Kostrikin, MCA, Davenport S T}.
The Euclidean algorithm works for any polynomial ring $K[x]$ over a field $K$, such as $\Q[x]$, $\R[x]$, $\C[x]$, $\Z_p[x]$  because these rings can easily be turned to an Euclidean domain by defining $\delta\big(f(x)\big) = \deg f(x)$ for any non-zero $f(x) \in K[x]$.

The situation is less simple in non-Euclidean domains, even in such a widely used ring as the ring $\Z [x]$ of polynomials with integer coefficients. That $\Z [x]$ is not an Euclidean domain is easy to show by elements $x, 2 \in \Z [x]$. If $\Z [x]$ were an Euclidean domain, it would contain elements $u(x), v(x)$ such that $x \cdot u(x) + 2 \cdot v(x) = \gcd(x,2)=\pm 1$, which is not possible.

The first of two tasks mentioned above, namely, {\it existence} of $\gcd$ can be accomplished for $\Z [x]$ by proving that $\Z [x]$ is a UFD, that is, an integrity domain in which every non-zero element $a$ has a factorization $a = \epsilon \,\, p_1 \cdots p_k$, where $\epsilon  \in R^*$ is a unit (invertible) element in $R$,   the elements $p_i$  are prime for all $i=1, \ldots , k$, and where the factorization above is unique in the sense that if $a$ has another factorization of that type $\theta \,\, q_1 \cdots q_s$, where $\theta  \in R^*$ and the elements $q_i$   are prime, then $k=s$ and (perhaps after some reordering of the prime factors) the respective prime elements are associated: $p_i \approx q_i$ for all $i=1, \ldots , k$. For briefness, in the sequel we will often omit the phrase ``perhaps after some reordering of the prime factors'' and this will cause no confusion.

After merging  the associated prime elements together, we get a unique factorization into prime-power elements:
\begin{equation}
\label{factorization a}
\text{ $a = \nu \,\, p^{\alpha_1}_1 \cdots p^{\alpha_n}_n$, \,\, $\nu  \in R^*$,\,\, $\alpha_i \in \N$ and $p_i \not\approx p_j$ for any $i \not= j$;\,\, $i,j = 1, \ldots, n$}
\end{equation}
(in some arguments below we may admit some of the factors $p^{\alpha_i}_i$ participate with degrees $\alpha_i=0$, this makes some notations simpler). From this it is easy to see that in a UFD $R$ the $\gcd(a,b)$ exists for any non-zero elements $a, b \in R$. 
Assume $b \in R$ has the factorization 
$$
b = \kappa \,\, p^{\alpha'_1}_1 \cdots p^{\alpha'_n}_n, \,\, \kappa \in R^*
$$ 
(we use the same primes $p_i$ in both factorizations because if, say, $p_i$ is not actually participating in one of those factorizations, we can add it as $p^{\alpha_i}_i$ with $\alpha_i = 0$). Then 
\begin{equation}
\label{min powers}
\gcd(a, b)= d = p^{\gamma_1}_1 \cdots p^{\gamma_n}_n,
\end{equation}
where $\gamma_i=\min \{\alpha_i , \alpha'_i \}$. This follows from uniqueness of factorization in UFD. For, if $h$ is a common divisor of $a, b$, and if $p_i$ is a prime divisor of $h$, then it also is a prime divisor of $a$ and of $b$.
The elements $p_i$ cannot participate in factorization of $h$ by a power greater than $\min \{\alpha_i , \alpha'_i \}$, because then $a$ (or $b$) would have an alternative factorization in which $p_i$ occurs more than $\alpha_i$ (or $\alpha'_i$) times.

The shortest way to see that $\Z[x]$ is a UFD is to apply Gauss's Theorem: if the ring $R$ is a UFD, then the polynomial ring $R[x]$ also is a UFD~\cite{Lang, Garrett, Basic algebra, Kostrikin, MCA}. Since $\Z$ is a UFD (that fact is known as ``the fundamental theorem of arithmetic''), $\Z[x]$ also is a UFD.

Clearly, $\gcd(a, b)$ is defined up to a unit multiplier from $R^*$. For integers from $R=\Z$ or for polynomials from $R=\Z[x]$ this unit multiplier can be just $-1$ or $1$. So to say: $\gcd(a, b)$ is defined ``up to the sign $\pm 1$'' because $\Z^*=\Z[x]^*=\{-1, 1\}$. And for polynomials from $R=\Z_p[x]$ the $\gcd(a, b)$ is defined up to any non-zero multiplier $t \in \Z_p^* = \{1,\ldots, p-1\}$. Taking this into account we can use $\gcd(a, b)=1$ and $\gcd(a, b) \approx 1$ as equivalent notations, since associated elements are defined up to a unit multiplier. Notice that in some sources they prefer to additionally introduce a normal form of the $\gcd$ to distinguish one fixed instance of the $\gcd$. Instead of using that extra term, we will just in a few places refer to the ``positive $\gcd$'', meaning that we take, say,  $2 = \gcd(6, 8)$, and not $-2$.

Furthermore, since the content $\cont{f(x)}$ of a polynomial $f(x)$ is a $\gcd$ for some elements (coefficients of the polynomials), the constant and the primitive part $\pp{f(x)}=f(x)/\cont{f(x)}$ can also be considered up to a unit multiplier.
For a non-zero polynomial $f(x) \in \Z[x]$ we can choose the $\cont{f(x)}$ so that  $\sgn{\cont{f(x)}} = \sgn{\lc{f(x)}}$, that is, the $\cont{f(x)}$ has the same sign as the leading coefficient of  $f(x)$. Then the leading coefficient $\lc{\pp{f(x)}}$ of the primitive part $\pp{f(x)}=f(x)/\cont{f(x)}$ will be positive. We will use this below without special notification.

Now we would like to a little restrict the algebraic background we use. Two main algebraic systems, used in the Big prime modular $\gcd$ algorithm, are the Euclidean domains and the UFD's. However, their usage is ``asymetric'' in the sense that the Euclidean domains and Euclidean algorithm are used in many parts of the Big prime modular $\gcd$ algorithm, whereas the UFD's are used just to prove that $\gcd$ does exist. Moreover, it is easy to understand that (\ref{factorization a}) and (\ref{min powers}) may hardly be effective tools to calculate a $\gcd$, since they are using factorization of elements to primes, while finding such a factorization is a more complicated task than finding just the $\gcd$. Thus, it is reasonable to drop the UFD's from consideration, and to obtain (\ref{factorization a}) directly using Gauss's Lemma on primitive polynomials in $\Z[x]$ (a polynomial $f(x) \in \Z[x]$ is primitive if $\cont{f(x)}\approx 1$, that is, $\pp{f(x)}= f(x) / \cont{f(x)}\approx f(x)$).

By Gauss's Lemma, a product of two primitive polynomials is primitive in $\Z[x]$~\cite{Lang, Garrett, Basic algebra, Kostrikin, MCA}. So if 
\begin{equation}
\label{decompositions}
\text{$f(x)= \cont{f(x)} \cdot \pp{f(x)}$ and $g(x)= \cont{g(x)} \cdot \pp{g(x)}$},
\end{equation}
then 
\begin{equation}
\label{two decompositions}
\begin{array}{l} 
\cont{f(x) \cdot g(x)}  = \cont{f(x)}\cdot \cont{g(x)} \\ 
\pp{f(x)\cdot g(x)}  = \pp{f(x)}\cdot \pp{g(x)}.
\end{array}
\end{equation}

The following is easy to deduce from Gauss's Lemma
\begin{Lemma}
\label{divides in Z and Q}
If $f(x), t(x) \in \Z[x]$ and $t(x)$ is primitive, then if $t(x)$ divides $f(x)$ in the ring $\Q[x]$, then $t(x)$ also divides $f(x)$ in  $\Z[x]$.
\end{Lemma}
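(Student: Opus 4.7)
The plan is to clear denominators and use Gauss's Lemma (in the form of equation (\ref{two decompositions})) to show that, up to a unit, the denominator must divide the numerator of the quotient.

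First I would unpack the hypothesis: since $t(x)$ divides $f(x)$ in $\Q[x]$, there exists $h(x) \in \Q[x]$ with $f(x) = t(x)\, h(x)$. The goal is to upgrade $h(x)$ to an element of $\Z[x]$. To that end, I would write $h(x) = \frac{c}{d}\, h_1(x)$, where $c, d \in \Z$ are coprime integers with $d > 0$, and $h_1(x) \in \Z[x]$ is a primitive polynomial (this is always possible by clearing denominators from $h(x)$ and then pulling out the integer content).

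Next, multiplying through by $d$ gives the identity $d\, f(x) = c\, t(x)\, h_1(x)$ in $\Z[x]$. Taking contents on both sides and using that $\cont{\alpha\, p(x)} = \alpha \cdot \cont{p(x)}$ for any integer $\alpha$, we obtain
\[
d \cdot \cont{f(x)} \;=\; c \cdot \cont{t(x)\, h_1(x)}.
\]
Here is where Gauss's Lemma enters through (\ref{two decompositions}): because both $t(x)$ and $h_1(x)$ are primitive, so is their product, and hence $\cont{t(x)\, h_1(x)} \approx 1$. Therefore $d \cdot \cont{f(x)} = \pm\, c$, which together with $\gcd(c,d)=1$ forces $d = \pm 1$, i.e.\ the rational $c/d$ is actually an integer. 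Consequently $h(x) = \frac{c}{d} h_1(x) \in \Z[x]$, and the factorization $f(x) = t(x)\, h(x)$ now lives in $\Z[x]$, as required.

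The only mildly subtle point is the bookkeeping of units and signs when equating contents (since $\cont{\cdot}$ is only defined up to $\pm 1$), but this is benign: the conclusion $c/d \in \Z$ holds regardless of the sign, and the rest is immediate. Apart from that, the argument is a direct application of Gauss's Lemma as already recorded in the excerpt.
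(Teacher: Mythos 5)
Your proof is correct and follows exactly the route the paper intends: the paper gives no written proof, merely asserting that the lemma ``is easy to deduce from Gauss's Lemma,'' and your clearing-of-denominators argument via the primitivity of $t(x)\,h_1(x)$ is precisely that deduction. The only cosmetic remark is that from $d\cdot\cont{f(x)}=\pm c$ one concludes $d\mid c$ and hence $d=1$ by coprimality, which is what you say; no gap.
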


The unique factorization of any non-zero $f(x) \in \Z[x]$ is easy to obtain from decompositions (\ref{two decompositions}) above and from Lemma~\ref{divides in Z and Q}. Let us just outline it,  the details can be found in~\cite{Lang, Garrett, MCA, Davenport S T, ImGirk}. 
By the fundamental theorem of arithmetic $\cont{f(x)}$ can in a unique way be presented as a products of powers of primes:
$
\cont{f(x)} = \nu \,\, p^{\alpha_1}_1 \cdots p^{\alpha_n}_n.
$
So, if $\deg f(x) = 0$, then we are done.

Assume $\deg f(x)>0$. If $f(x)$ is not prime, then by repeatedly splitting it to products of factors of lower degree as many times as needed, we will eventually get a presentation of $f(x)$ as a product of  $\cont{f(x)}$ and of some finitely many primitive prime polynomials $q_i(x)$ of degrees greater than $0$. We do not yet have the uniqueness of this decomposition, but we can  still group the associated elements together to get the presentation:
\begin{equation}
\label{decom. first}
f(x)= \cont{f(x)} \cdot \pp{f(x)}= \nu \,\, p^{\alpha_1}_1 \cdots p^{\alpha_n}_n \, \cdot \, q^{\beta_1}_1(x) \cdots q^{\beta_m}_m(x).
\end{equation}
If $f(x)$ has another, alternative presentation of this sort and if $t(x)$ is one of the primitive prime factors (of  degree greater than $0$) of that presentation, then the product (\ref{decom. first}) is divisible by $t(x)$. By Lemma \ref{divides in Z and Q}, $t(x)$ divides $f(x)$ also in $\Z[x]$. Since $t(x)$ is prime, it is associated to one of $q_i(x)$. Eliminate one instance of this $q_i(x)$ in (\ref{decom. first}) and consider $f(x)/q_i(x)$. If $f(x)/q_i(x)$ also is divisible by $q_i(x)$, we repeat the process. If not, we turn to other primitive prime polynomials (of degree greater than $0$) dividing what remains from (\ref{decom. first}) after eliminations. After finitely many steps (\ref{decom. first}) will become  $\nu \,\, p^{\alpha_1}_1 \cdots p^{\alpha_n}_n$, and also from the other, alternative presentation a constant should be left only. So we apply  the fundamental theorem of arithmetic  one more time to get that (\ref{decom. first}) is the unique factorization.

We see that (\ref{decom. first}) is a particular case of (\ref{factorization a}). The proof above avoided usage of Gauss's Theorem and the formal definitions of the UFD's. And we see that the prime elements of $\Z[x]$ are of two types: {\it prime numbers} and  {\it primitive prime polynomials} of degrees greater than $0$.

Existence of $\gcd\big(f(x), g(x)\big)$ for any two non-zero polynomials in $f(x), g(x) \in \Z[x]$ can be deduced from (\ref{decom. first}) in analogy with (\ref{min powers}). If 
\begin{equation}
\label{decom. second}
g(x) = \nu' \,\, p^{\alpha'_1}_1 \cdots p^{\alpha'_n}_n \cdot q^{\beta'_1}_1(x) \cdots q^{\beta'_m}_m(x),
\end{equation}
then 
\begin{equation}
\label{min powers Z}
\gcd\big(f(x), g(x)\big) = \kappa \,\, p^{\gamma_1}_1 \cdots p^{\gamma_n}_n \cdot q^{\delta_1}_1(x) \cdots q^{\delta_m}_m(x),
\end{equation}
where $\kappa= \pm 1$, \,  $\gamma_i=\min \{\alpha_i , \alpha'_i \}$, \,  $\delta_j=\min \{\beta_j , \beta'_j \}$; \, ($i=1,\ldots,n; \,\, j=1,\ldots,m$).
However, like we admitted earlier, (\ref{min powers}) and (\ref{min powers Z}) are no effective tools to calculate the $\gcd$. We will turn to $\gcd$ calculation algorithm in the next section.

\vskip4mm
(\ref{min powers}) and (\ref{min powers Z}) allow us to get some information that we will be essential later. Observe that the following definition of $\gcd$, often used in elementary mathematics, is no longer true for general polynomial rings: {\it ``$d(x)$ is the greatest common divisor of $f(x)$ and $g(x)$ if it is their common divisor of maximal degree''}. For example, for $f(x)=12x^2+24x+12$ and $g(x)=8x+8$ the maximum of degrees of their common divisors is $1$. Nevertheless, $h(x) = x + 1$ is not the $\gcd\big(f(x), g(x)\big)$, although $h(x) | f(x)$, $h(x) | g(x)$ and $\deg h(x)=1$. For, $x+1$ is not divisible by the common divisor $2x+2$. We can detect the cases when the divisor of highest degree is the $\gcd$.

\begin{Lemma}
\label{highest degree is gcd}
For polynomials $f(x), g(x) \in \Z[x]$ their common divisor of maximal degree $h(x)$  is their $\gcd$ if and only if $\cont{h(x)} \approx \gcd \big(\cont{f(x)}, \cont{g(x)}\! \big)$.
\end{Lemma}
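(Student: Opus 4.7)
My plan is to compare an arbitrary common divisor $h(x)$ of maximal degree with the canonical $\gcd$ given by formula~(\ref{min powers Z}), and to read off what the content condition forces. Since both directions rest on the uniqueness of factorization established in~(\ref{decom. first}), I will use the factorizations~(\ref{decom. first}) and~(\ref{decom. second}) of $f(x)$ and $g(x)$ with the same lists of prime numbers $p_1,\ldots,p_n$ and primitive prime polynomials $q_1(x),\ldots,q_m(x)$ throughout, and I will denote $d(x)=\gcd\big(f(x),g(x)\big)$, as given by~(\ref{min powers Z}).

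First I would observe that any common divisor $h(x)$ of $f(x)$ and $g(x)$ inherits the shape
$$
h(x) = \mu \,\, p_1^{a_1}\cdots p_n^{a_n}\cdot q_1^{b_1}(x)\cdots q_m^{b_m}(x),\qquad \mu=\pm 1,
$$
with $a_i\le\gamma_i$ and $b_j\le\delta_j$ for all $i,j$, because unique factorization forbids $p_i$ or $q_j(x)$ from appearing in $h(x)$ more often than in either of $f(x)$ or $g(x)$. Hence $\deg h(x)=\sum_j b_j\deg q_j(x)\le \sum_j \delta_j\deg q_j(x)=\deg d(x)$. If $h(x)$ has maximal degree among common divisors, equality forces $b_j=\delta_j$ for every $j$, so $\pp{h(x)}\approx \pp{d(x)}$. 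Thus for any common divisor of maximal degree, $h(x)$ and $d(x)$ differ only in their constant (content) part.

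Now the two directions split cleanly. For ($\Rightarrow$), if $h(x)\approx d(x)$, then taking contents and recalling from~(\ref{min powers Z}) that $\cont{d(x)}=\pm\,p_1^{\gamma_1}\cdots p_n^{\gamma_n}$, which is precisely $\gcd\big(\cont{f(x)},\cont{g(x)}\big)$ by the minimum-of-exponents formula~(\ref{min powers}) applied inside $\Z$, we get $\cont{h(x)}\approx\gcd\big(\cont{f(x)},\cont{g(x)}\big)$. For ($\Leftarrow$), using the preceding paragraph $\cont{h(x)}=\pm\,p_1^{a_1}\cdots p_n^{a_n}$ with $a_i\le\gamma_i$; the hypothesis $\cont{h(x)}\approx p_1^{\gamma_1}\cdots p_n^{\gamma_n}$ together with unique factorization in $\Z$ forces $a_i=\gamma_i$ for every $i$. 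Combined with $b_j=\delta_j$, this gives $h(x)\approx d(x)=\gcd\big(f(x),g(x)\big)$.

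The only genuinely delicate step is the claim that a common divisor of maximal degree must attain $b_j=\delta_j$ for every $j$; the rest is bookkeeping with exponents. The mild subtlety there is that one could imagine "trading" exponents between different $q_j$'s, but since each $q_j(x)$ is non-associate to $q_i(x)$ for $i\ne j$, the degree is a strictly monotone function of each $b_j$ separately, so maximality is componentwise. With that in hand, both implications follow immediately from comparing~(\ref{min powers Z}) with the shape of $h(x)$.
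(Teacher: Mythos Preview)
Your proof is correct and follows exactly the approach the paper indicates: the paper simply states that the lemma ``easily follows from (\ref{decom. first}), (\ref{decom. second}) and (\ref{min powers Z}),'' and you have spelled out precisely that deduction by comparing exponents in the unique factorizations. The only point worth noting is that your justification of $b_j=\delta_j$ is cleaner than your closing paragraph suggests: since $b_j\le\delta_j$ and every $\deg q_j(x)>0$, equality of the degree sums forces equality termwise immediately, with no ``trading'' issue to worry about.
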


The lemma easily follows from (\ref{decom. first}), (\ref{decom. second}) and (\ref{min powers Z}). We see that in example above the condition was missing: $\cont{x+1}=1$ but $\gcd \big(\cont{f(x)}, \cont{g(x)}\! \big) = \gcd (12,8) \approx 4 \not\approx 1$. In fact, $\gcd \big(f(x), g(x)\big) \approx 4x + 4$.

\begin{Corollary}
\label{highest degree is gcd for primitives}
For primitive polynomials $f(x), g(x) \in \Z[x]$ their common divisor of maximal degree $h(x)$ is their $\gcd$ if and only if $h(x)$ is primitive.
\end{Corollary}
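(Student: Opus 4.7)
The plan is to deduce this corollary directly from Lemma~\ref{highest degree is gcd} by specializing it to the primitive case. The key observation is that the content of a primitive polynomial is (associate to) $1$, so the gcd condition on contents in Lemma~\ref{highest degree is gcd} collapses to a much simpler statement about $h(x)$ alone.

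More concretely, I would proceed in three short steps. First, I would invoke the definition of primitivity for $f(x)$ and $g(x)$ to conclude that $\cont{f(x)} \approx 1$ and $\cont{g(x)} \approx 1$. Second, I would compute the right-hand side of the criterion in Lemma~\ref{highest degree is gcd}: since $\gcd(1,1) \approx 1$, we get $\gcd\!\big(\cont{f(x)}, \cont{g(x)}\big) \approx 1$. Third, I would apply Lemma~\ref{highest degree is gcd} to reduce the statement ``$h(x)$ is the $\gcd$'' to ``$\cont{h(x)} \approx 1$'', which by definition is the same as ``$h(x)$ is primitive''.

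I do not foresee any genuine obstacle here, since the corollary is essentially a one-line specialization of Lemma~\ref{highest degree is gcd}. The only mild subtlety worth being explicit about is the convention that $\gcd$ and content are defined only up to units in $\Z^* = \{\pm 1\}$ (as discussed in the paragraphs preceding Lemma~\ref{divides in Z and Q}), which is why the equivalences are written with $\approx$ rather than $=$; but this is purely notational and does not require any additional argument.
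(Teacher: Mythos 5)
Your proposal is correct and follows exactly the route the paper intends: Corollary~\ref{highest degree is gcd for primitives} is stated as an immediate specialization of Lemma~\ref{highest degree is gcd}, with $\cont{f(x)} \approx \cont{g(x)} \approx 1$ collapsing the content condition to primitivity of $h(x)$. Your remark about working up to units in $\Z^* = \{\pm 1\}$ matches the paper's conventions, and no further argument is needed.
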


In the case if polynomials are over a field, the situation is simpler. For any field $K$ the polynomial ring $K[x]$ is a UFD (and even an Euclidean domain). Any non-zero $f(x) \in K[x]$ has a factorization 
\begin{equation}
\label{factorization on field}
f(x)= \theta \,\, \cdot \, q^{\beta_1}_1(x) \cdots q^{\beta_m}_m(x), \,\,\, \theta \in K^*, \,\,\, \deg{q_i(x)}>0, \,\,\, i=1,\ldots,m,
\end{equation}
which is unique in the sense mentioned above. Since all non-zero scalars in $K$ are units, what we in (\ref{decom. first}) above had  as a product of some prime numbers, actually ``merges'' in $K$ into a unit: 
$$ 
\nu \,\, \cdot p^{\alpha_1}_1 \cdots p^{\alpha_n}_n = \theta \in K^* = K \backslash \{0\}.
$$

Comparing factorizations of type (\ref{factorization on field}) for any non-zero polynomials $f(x), g(x) \in K[x]$ we easily get: 
\begin{Lemma}
\label{highest degree is gcd for fields}
For any non-zero polynomials $f(x), g(x) \in K[x]$ over a field $K$ their common divisor of maximal degree $h(x)$ is their $\gcd$.
\end{Lemma}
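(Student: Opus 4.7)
The plan is to exploit the unique factorization (\ref{factorization on field}) in $K[x]$ to write down an explicit $\gcd$ in the spirit of (\ref{min powers}) and (\ref{min powers Z}), and then compare degrees. First I would write $f(x)$ and $g(x)$ over a common list of primitive prime factors,
$$
f(x) = \theta \prod_{i=1}^{m} q_i(x)^{\beta_i}, \qquad g(x) = \theta' \prod_{i=1}^{m} q_i(x)^{\beta'_i},
$$
(inserting zero exponents on either side whenever a given $q_i(x)$ is missing), and observe, exactly as in the derivation of (\ref{min powers}), that
$$
d(x) \,:=\, \prod_{i=1}^{m} q_i(x)^{\min\{\beta_i,\,\beta'_i\}}
$$
is a $\gcd\big(f(x), g(x)\big)$ up to a multiplier from $K^*$. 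The uniqueness-of-factorization argument that underlay (\ref{min powers}) carries over verbatim, since $K[x]$ is a UFD.

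Next I would compare $h(x)$ to $d(x)$ from two sides. On one hand, $h(x)$ is a common divisor of $f(x)$ and $g(x)$, so by the $\gcd$ property $h(x) \mid d(x)$; writing $d(x) = h(x)\cdot s(x)$ for some $s(x)\in K[x]$ and comparing degrees gives $\deg h(x) \le \deg d(x)$. On the other hand, $d(x)$ is itself a common divisor, and $h(x)$ was chosen to have maximal degree among all common divisors, so $\deg h(x) \ge \deg d(x)$. Combining the two inequalities forces $\deg s(x) = 0$, that is, $s(x) \in K^*$ is a unit of $K[x]$; hence $h(x) \approx d(x)$ and $h(x)$ is a $\gcd$.

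I do not expect a serious obstacle. The essential reason the conclusion holds over a field, while it fails over $\Z$ (as witnessed in Lemma~\ref{highest degree is gcd} by the pair $12x^2 + 24x + 12$, $8x + 8$), is that every non-zero element of $K$ is a unit in $K[x]$. Equivalently, the prime-number part $\nu\,p_1^{\alpha_1}\cdots p_n^{\alpha_n}$ of (\ref{decom. first}) collapses into the single unit $\theta$ of (\ref{factorization on field}), so there is no analogue of the content condition to impose on $h(x)$: the polynomial part alone determines the $\gcd$ up to association.
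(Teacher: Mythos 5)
Your proof is correct and follows essentially the same route as the paper, which simply notes that the lemma follows by comparing factorizations of type (\ref{factorization on field}) in analogy with (\ref{min powers}); you have merely spelled out the details. Your closing observation about why the argument fails over $\Z$ also matches the paper's own remark after the lemma, namely that the non-unit prime-power factors $p_i^{\gamma_i}$ in (\ref{min powers Z}) contribute nothing to the degree.
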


This, in particular, is true for rings mentioned above: $\Q[x]$, $\R[x]$, $\C[x]$, $\Z_p[x]$. We will use this fact later to construct the Big prime modular $\gcd$ algorithm and its modifications. 

The analog of Lemma~\ref{highest degree is gcd for fields} was not true for $\Z[x]$ because in factorization  (\ref{min powers Z}) we have the non-unit prime-power factors $p^{\gamma_i}_i $ which do participate in factorization of $d(x)= \gcd\big(f(x), g(x)\big)$, but which add {\it nothing} to the degree of $d(x)$. This is why maximality of the degree is no longer the only criterion in $\Z[x]$ to detect if the given $h(x)$ is the $\gcd$ or not.

\section{Some notations for modular reductions}
\label{reduction notations}

The following notations, adopted from~\cite{ImGirk}, are to make our arguments shorter and more uniform when we deal with numerals, polynomials and matrices. As above, let $\Z_p$ be the residue ring (finite Galois field $\Z_p = \F_p= \{0, \ldots, p-1\}$) and 
$$
\varphi_p: \Z \to \Z_p
$$
be the rings homomorphism mapping each $z \in \Z$ to the reminder after division of $z$ by $p$. That is, 
$
\varphi_p(z)\equiv z ({\rm mod} \,\, p)
$,
and 
$
\varphi_p(z) \in \Z_p
$.

We use the same symbol $\varphi_p$ to denote the homomorphism 
$$
\varphi_p: \Z[x] \to \Z_p[x],
$$
where $\Z_p[x]$ is the ring of polynomials over $\Z_p$, and $\varphi_p$ is mapping each of the coefficients $a_i$ of $f(x)\in \Z[x]$ to the reminder after division of $a_i$ by $p$. 

Similarly, we define the homomorphism of matrix rings 
$$
\varphi_p: M_{m,n}(\Z) \to M_{m,n}(\Z_p), 
$$
which maps each of the elements $a_{ij}$ of a matrix $A \in M_{m,n}(\Z)$ to  the reminder after division of $a_{ij}$ by $p$. 

Using the same symbol $\varphi_p$ for numeric, polynomial and matrix homomorphisms causes no misunderstanding below, and it is more comfortable for some reasons. These homomorphisms are called  {\it ``modular reductions''} or just {\it ``reductions''}. We can also specify these homomorphism as: {\it ``numeric modular reduction''}, {\it ``polynomial modular reduction''} or {\it ``matrix modular reduction''} where needed~\cite{ImGirk}. 

For $a \in \Z$ denote $\varphi_p(a) = a_p$. For $f(x) \in \Z[x]$ denote $\varphi_p\big(f(x)\big) = f_p(x)\in \Z_p[x]$. So if
\begin{equation}
\label{f(x)}
f(x) = a_0 x^n + \cdots + a_n
\end{equation}
then: 
$$
f_p(x) = \varphi_p\big(f(x)\big) = \varphi_p(a_0) x^n + \cdots + \varphi_p(a_n) 
= a_{0,p} x^n + \cdots + a_{n,p} \in \Z_p[x].
$$
And for a matrix $A \in M_{m,n}(\Z)$ denote $\varphi_p(A) = A_p \in M_{m,n}(\Z_p)$. If $A = \lVert a_{i,j} \rVert_{m \times n}$ then $A_p = \lVert \varphi_p(a_{i,j}) \rVert_{m \times n} = \lVert a_{i,j,p} \rVert_{m \times n}$.

\section{Problems at lifting the modular $\gcd$ to $\Z[x]$}
\label{four problems}

Now we turn to the second task mentioned earlier: {\it effective calculation} of the actual $\gcd \big(f(x), g(x)\big)$ for the given non-zero polynomials $f(x), g(x) \in \Z[x]$. 

The ring $\Z_p[x]$ is an Euclidean domain, unlike the ring $\Z[x]$. So we can use the Euclidean algorithm to calculate $\gcd$ for any non-zero polynomials in $\Z_p[x]$, including the modular images $f_p(x)$ and $g_p(x)$. 
Since the notation $\gcd\big(f_p(x), g_p(x)\big)$ is going to be used repeatedly, for briefness denote by $e_p(x)$ the $\gcd$ calculated by Euclidean algorithm for $f_p(x), g_p(x)$. Let us stress that $\gcd\big(f_p(x), g_p(x)\big)$ is not determined uniquely, since for any non-zero $t \in \Z_p$ the product $t \cdot \gcd\big(f_p(x), g_p(x)\big)$ also is a $\gcd$ for $f_p(x), g_p(x)$. We are denoting just {\it one} of these $\gcd$'s (namely, that computed by the Euclidean algorithm) by $e_p(x)$. This $e_p(x)$ is unique, since at each step of the Euclidean algorithm we have a unique action to take (to see this just consider the steps of ``long division'' used to divide $f_p(x)$ by $g_p(x)$ on field $\Z_p$). 

The main idea of the  algorithm is to calculate the $e_p(x) \approx \gcd\big(f_p(x), g_p(x)\big)$ for some suitable $p$, and to reconstruct $d(x) = \gcd\big(f(x), g(x)\big)$ by it.
We separate the process to four main problems that may occur, and show how to overcome each one to arrive to a correctly working algorithm.

\vskip2mm
\subsection{Problem 1. Avoiding the eliminating coefficients} 
$\phantom .$

After reduction $\varphi_p$ some of the coefficients of $f(x)$ and $g(x)$ may change or even eliminate. So their images $f_p(x)=\varphi_p\big(f(x)\big)$ and $g_p(x)=\varphi_p\big(g(x)\big)$ may keep very little information to reconstruct the $d(x)$ based on $e_p(x)$. 

\begin{Example}
If $f(x)=7x^2+22$ and $g(x)=49x^3+154x$ then for $p=7$ we get $f_p(x) = 1$ and $g_p(x) = 0$. So these values contain no reliable information to reconstruct the $\gcd\big(f(x), g(x)\big)$.
\end{Example}

The first simple idea to avoid such eliminations is to take $p$ larger than the absolute value of all coefficients of $f(x)$ and $g(x)$. This, however, is not enough since a divisor $h(x)$ of a polynomial $f(x)$ may have coefficients, larger than those of $f(x)$. Moreover, using the cyclotomic polynomials for large enough $n$:
$$
\phi_n (x)= \hskip-6mm \prod_{\,\,\,\, k=1 , \cdots , n; \,(k,n)=1} \hskip-9mm \big(x-e^{\frac{2i\pi k}{n}}\big)
$$ 
one can get divisors of $f(x)=x^n-1$ which have a coefficient larger than any pre-given number~\cite{Garrett, MCA, ImGirk}. Since we do not know the divisors of $f(x)$ and $g(x)$, we cannot be sure if the above mentioned large $p$ will be large enough to prevent eliminations of coefficients of $h(x)$. To overcome this one can use the Landau-Mignotte bounds\footnote{%
In different sources the bounds on coefficients of the divisors are called differently, associating them by names of L.~Landau or M.~Mignotte or by both of them. These authors have different roles in development of the formulas, which in turn are consequence of a formula by A. L. Cauchy.}, as it is done in~\cite{Davenport S T, MCA, Pankratev}.  For a polynomial $f(x)$ given by (\ref{f(x)}) denote its norm by $\lVert f(x) \rVert = \sqrt{\sum^n_{i=0 }a_i^2}$.

\begin{Theorem}[L.~Landau, M.~Mignotte]
\label{Landau Mignotte} 
Let $f(x) = a_0 x^n + \cdots + a_n$ and $h(x) = c_0 x^k + \cdots + c_k$ be non-zero polynomials in $\Z[x]$. If $h(x)$ is a divisor of $f(x)$, then:
\begin{equation}
\label{L-M}
\sum^n_{i=0 }| c_i | \le 2^k  \cdot |c_0/a_0 | \cdot \lVert f(x) \rVert.
\end{equation}
\end{Theorem}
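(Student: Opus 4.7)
The plan is to go through the classical \emph{Mahler measure}. For any non-zero polynomial $F(x) \in \C[x]$ with leading coefficient $\lambda$ and complex roots $z_1, \ldots, z_N$ (counted with multiplicity), set
$$M(F) = |\lambda| \prod_{i=1}^{N} \max(1, |z_i|).$$
The theorem will follow by chaining three classical ingredients.

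First, since $h \mid f$ in $\Z[x]$ forces the same divisibility in $\C[x]$, the roots $\beta_1, \ldots, \beta_k$ of $h$ form a sub-multiset of the roots $\alpha_1, \ldots, \alpha_n$ of $f$. A direct comparison of the products in the definition of $M$ then gives
$$M(h) = |c_0|\prod_{j=1}^{k}\max(1,|\beta_j|) \le \frac{|c_0|}{|a_0|}\, M(f).$$

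Second, writing $h(x) = c_0 \prod_{j=1}^{k}(x-\beta_j)$ and expanding via Vieta, each coefficient satisfies $c_i = (-1)^{k-i} c_0 \cdot e_{k-i}(\beta_1,\ldots,\beta_k)$; the elementary bound
$$|e_{k-i}(\beta_1,\ldots,\beta_k)| \le \binom{k}{k-i}\prod_{j=1}^{k}\max(1,|\beta_j|)$$
therefore yields $|c_i| \le \binom{k}{i} M(h)$ for $i=0,\ldots,k$, and summing over $i$ gives $\sum_{i=0}^{k}|c_i| \le 2^k M(h)$.

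The third and only analytic ingredient is \emph{Landau's inequality} $M(f) \le \lVert f \rVert$; I expect this to be the one delicate step. I would prove it by the root-reversal trick: set
$$\tilde f(x) = a_0 \prod_{|\alpha_i|\le 1}(x-\alpha_i)\cdot\prod_{|\alpha_j|>1}(1-\bar\alpha_j x).$$
The identity $|1-\bar\alpha\, e^{i\theta}| = |e^{i\theta}-\alpha|$ on the unit circle (a one-line computation using $|z|^2 = z\bar z$) gives $|\tilde f(e^{i\theta})| = |f(e^{i\theta})|$ for all $\theta$, so by Parseval's identity $\lVert \tilde f\rVert = \lVert f\rVert$. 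On the other hand, expanding the products shows that the leading coefficient of $\tilde f$ equals $a_0\prod_{|\alpha_j|>1}(-\bar\alpha_j)$, whose absolute value is exactly $M(f)$. Since any single coefficient is bounded by the full $\ell^2$-norm, $M(f) \le \lVert \tilde f\rVert = \lVert f\rVert$. Chaining the three estimates yields
$$\sum_{i=0}^{k}|c_i| \le 2^k M(h) \le 2^k\frac{|c_0|}{|a_0|}M(f) \le 2^k\left|\frac{c_0}{a_0}\right|\lVert f\rVert,$$
which is the claim; the sum in the statement runs to $n$ rather than $k$, but this is harmless under the convention $c_i = 0$ for $i > k$.
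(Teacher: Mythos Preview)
Your argument is correct and is the standard Mahler-measure route to the Landau--Mignotte bound: the three ingredients (multiplicativity of $M$ under divisibility, the Vieta estimate $\sum_i|c_i|\le 2^k M(h)$, and Landau's inequality $M(f)\le\lVert f\rVert$ via the root-reversal trick and Parseval) are assembled correctly, and the remark about the summation index is apt.

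As for comparison with the paper: there is nothing to compare, because the paper does not supply a proof of this theorem. It only states that ``the proof is based on calculations on complex numbers'' and refers the reader to \cite{MCA, ImGirk}. Your Mahler-measure argument is exactly the kind of complex-analytic computation those references carry out, so your write-up is fully in line with what the paper gestures at.
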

The proof is based on calculations on complex numbers, and it can be found, for example, in~\cite{MCA, ImGirk}. We are going to use the Landau-Mignotte bounds in the following two shapes:

\begin{Corollary}
\label{N_f corollary}
In notations of Theorem~\ref{Landau Mignotte} there is the following upper bound for the coefficients of $h(x)$:
\begin{equation}
\label{N_f}
| c_i | \le N_f =  2^{n-1} \lVert f(x) \rVert.
\end{equation}
\end{Corollary}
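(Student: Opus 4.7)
The plan is to derive the corollary directly from Theorem~\ref{Landau Mignotte}, turning the combined sum-bound $\sum^k_{j=0}|c_j|$ into a bound on each individual coefficient, and then absorbing the factor $|c_0/a_0|$ and the exponent $k$ into the cleaner form $2^{n-1}\lVert f(x)\rVert$.

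First I would note the trivial inequality $|c_i| \le \sum_{j=0}^k |c_j|$ valid for every index $i \in \{0,\ldots,k\}$, so that \eqref{L-M} already yields
$$
|c_i| \;\le\; 2^k \cdot \bigl|c_0/a_0\bigr| \cdot \lVert f(x)\rVert.
$$
The remaining task is to show that the scalar $2^k |c_0/a_0|$ is dominated by $2^{n-1}$. Since $h(x)$ divides $f(x)$ in $\Z[x]$, we can write $f(x) = h(x)\, q(x)$ with $q(x) \in \Z[x]$ and $\deg q(x) = n-k$; comparing leading coefficients gives $a_0 = c_0 \cdot \lc{q(x)}$, so $c_0$ divides $a_0$ in $\Z$ and hence $|c_0/a_0| \le 1$.

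Now I would split into cases on $k$. If $k \le n-1$, then plainly $2^k \le 2^{n-1}$, and combining with $|c_0/a_0| \le 1$ yields the asserted bound $|c_i| \le 2^{n-1}\lVert f(x)\rVert = N_f$. If instead $k = n$, the cofactor $q(x)$ is a non-zero integer constant $b_0$, so $a_i = c_i b_0$ for every $i$, which gives $|c_i| \le |a_i| \le \lVert f(x)\rVert \le 2^{n-1}\lVert f(x)\rVert$ (the last step being trivial for $n\ge 1$, the only case of interest in the algorithm).

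The step requiring the most care is the bookkeeping with the ratio $c_0/a_0$: one must observe that $h(x)\mid f(x)$ in $\Z[x]$ (not merely in $\Q[x]$) is what forces $c_0\mid a_0$ as integers, and hence $|c_0/a_0|\le 1$. Everything else is an estimate that follows mechanically from \eqref{L-M} together with $k\le n$.
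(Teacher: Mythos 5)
Your proof is correct and follows essentially the same route as the paper's: bound $|c_i|$ by the sum in \eqref{L-M}, observe $|c_0/a_0|\le 1$ because $c_0\mid a_0$, and split into the cases $k\le n-1$ (where $2^k\le 2^{n-1}$) and $k=n$ (where $f(x)$ is a constant multiple of $h(x)$ and the coefficients are bounded directly by $\lVert f(x)\rVert$). You merely spell out a couple of steps the paper leaves implicit, such as why $c_0$ divides $a_0$ and the caveat that $n\ge 1$ is needed in the degenerate case.
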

\begin{proof}
To obtain this from (\ref{L-M}) first notice that $|c_0/a_0 | \le 1$. 

Next, if $k = \deg h(x) = \deg f(x) = n$, then $f(x)=r \cdot h(x)$, where $r$ is a non-zero integer. Then $| c_i | \le \max\{| c_i | \, | \, i=0,\ldots, n\} \le \max\{| a_i | \, | \, i=0,\ldots, n\}\le \lVert f(x) \rVert$. 

Finally, if $k =\deg h(x) \le  n-1$ ($k$ is unknown to us), then we can simply replace in (\ref{L-M}) the value $2^k$ by $2^{n-1}$. 
\end{proof}

\begin{Remark}
\label{smaller degree}
In literature they use the rather less accurate bound $| c_i | \le 2^{n} \lVert f(x) \rVert$, but the second paragraph of our proof above allows to replace $2^{n}$ by $2^{n-1}$\!. See also Remark~\ref{Let us stress} .
\end{Remark}

\begin{Corollary}
\label{N_fg corollary}
In notations of Theorem~\ref{Landau Mignotte}, if $h(x)$ also is a divisor of the polynomial $g(x)=b_0 x^m+\cdots+b_m$, then  there is the following upper bound for the coefficients of $h(x)$:
\begin{equation}
\label{N_fg}
| c_i | \le N_{f,g} = 2^{\min \{ n,m \}} \cdot \gcd(a_0, b_0) \cdot
\min
\left\{
{{\lVert f(x) \rVert}\over{|a_0|}},
{{\lVert g(x) \rVert}\over{|b_0|}}
\right\}.
\end{equation}
\end{Corollary}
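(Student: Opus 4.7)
The plan is to deduce this directly from the Landau--Mignotte inequality (\ref{L-M}), applied separately to each of the two polynomials $f$ and $g$, and then to combine the two resulting estimates by taking the smaller one.

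First I would record two consequences of the assumption that $h(x)$ divides both $f(x)$ and $g(x)$ in $\Z[x]$. Writing $f(x) = h(x) \cdot q_1(x)$ and $g(x) = h(x) \cdot q_2(x)$ with $q_1, q_2 \in \Z[x]$ and comparing leading coefficients gives $c_0 \mid a_0$ and $c_0 \mid b_0$ in $\Z$, hence $c_0 \mid \gcd(a_0, b_0)$, and in particular $|c_0| \le \gcd(a_0, b_0)$. From $h(x) \mid f(x)$ and $h(x) \mid g(x)$ I also get $k = \deg h(x) \le \min\{n, m\}$, so $2^{k} \le 2^{\min\{n, m\}}$.

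Next I would apply (\ref{L-M}) to $f(x)$ and bound the coefficient $|c_i|$ by the left-hand sum, obtaining
$$
|c_i| \;\le\; \sum_{j=0}^{k} |c_j| \;\le\; 2^{k} \cdot \frac{|c_0|}{|a_0|} \cdot \lVert f(x) \rVert \;\le\; 2^{\min\{n,m\}} \cdot \gcd(a_0, b_0) \cdot \frac{\lVert f(x) \rVert}{|a_0|},
$$
and then repeat the same estimate with $g(x)$ in place of $f(x)$ to get the analogous bound with $\lVert g(x) \rVert / |b_0|$. Taking the smaller of the two bounds gives exactly (\ref{N_fg}).

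The argument is essentially routine, so there is no real obstacle; the only point to be alert to is that the statement uses $2^{\min\{n,m\}}$ and not the sharper $2^{\min\{n,m\}-1}$. This means that no case split on whether $k = n$ or $k = m$ (of the kind used in Corollary \ref{N_f corollary} to pass from $2^{n}$ to $2^{n-1}$) is needed; the crude bound $2^{k} \le 2^{\min\{n,m\}}$ coming straight from (\ref{L-M}) already does the job.
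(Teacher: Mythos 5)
Your proof is correct and follows essentially the same route as the paper's: the paper likewise deduces (\ref{N_fg}) from (\ref{L-M}) by observing that the leading coefficient $c_0$ of a common divisor divides both $a_0$ and $b_0$ (hence $|c_0|\le\gcd(a_0,b_0)$), bounding $2^k$ by $2^{\min\{n,m\}}$, and taking the minimum of the two resulting estimates. Your write-up simply makes explicit the steps the paper leaves as a one-line remark.
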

\begin{proof}
To obtain this from (\ref{L-M}) just notice that if $h(x)$ is a common divisor for $f(x)$ and $g(x)$, then its leading coefficient $c_0$ divides both $a_0$ and $b_0$. 
\end{proof}

Formula (\ref{N_fg}) provides the hint to overcome Problem 1 about eliminating coefficients, mentioned at the start of this subsection. Although the divisors $h(x)$ of $f(x)$ and $g(x)$ are yet unknown, we can compute $N_{f,g}$ and take $p > N_{f,g}$. If we apply the reduction $\varphi_p$ for this $p$ we can be sure that none of the coefficients of $h(x)$ has changed ``much'' under that homomorphism, for, $\varphi_p$ does not alter the non-negative coefficients of $h(x)$, and it just adds $p$ to all negative coefficients of $h(x)$. The same holds true for  $d(x) = \gcd \big(f(x), g(x)\big)$.

\vskip2mm
\subsection{Problem 2. Negative coefficients and reconstruction of the pre-image} 
$\phantom .$\vskip-4mm

The reduction $\varphi_p$ is not a bijection, and $d_p(x)$ has infinitely many pre-images in $\Z[x]$. But the relatively uncomplicated relationship between coefficients of $d(x)$ and $d_p(x)$, obtained in previous subsection, may allow us to reconstruct $d(x)$ if we know $d_p(x)$. 
The condition $p > N_{f,g}$ puts a restriction on the pre-image $d(x)$: the coefficients of $d(x)$ are either equal to respective coefficients of $d_p(x)$ (if they are non-negative), or they are the respective  coefficients of $d_p(x)$ minus $p$ (if they are negative). Reconstruction may cause problems connected with negative coefficients.

\begin{Example}
If for some polynomials $f(x), g(x)$ we have $N_{f,g} = 15$, we can take the prime, say, $p = 17 > N_{f,g}$. Assume we have somehow calculated $d_{17}(x)=12x^3+3x+10$, we can be sure that $d(x)$ is not the pre-image $29x^3- 17x^2+ 20x+27$ because $d(x)$ cannot have coefficients greater than $15$ by absolute value. But we still cannot be sure if the pre-image $d(x)$ is $12x^3+3x+10$, or $-5x^3+3x+10$, or maybe $-5x^3-14x-7$. 
\end{Example}

It is easy to overcome this by just taking a larger value: 
$$
p > 2 \cdot N_{f,g}.
$$
If the coefficient $c_i$ of $d(x)$ is non-negative, then $\varphi_p (c_i) = c_i < p/2$, and if it is negative, then $\varphi_p (c_i) = c_i + p > p/2$. This provides us with the following very simple algorithm to reconstruct $d(x)$ if we have already computed $d_p(x)$ for sufficiently large prime $p$.

\begin{Algorithm}[The polynomial reconstruction by modular image]
\label{Alg reconstruct} 
{\it $\phantom{.}$
\\
Input: For an unknown polynomial $d(x) \in \Z[x]$ we know the upper bound $N$ of absolute values of its coefficients, and for arbitrarily large prime number $p$ we have the modular image $d_p(x) = b_{0,p} x^k+ \cdots +b_{k,p} \in \Z_p[x]$.\\
Reconstruct the polynomial $d(x)$.
}

{\parskip1mm

\noindent
01. Choose any prime $p > 2\cdot N$.

\noindent
02. Set $k = \deg{d_p(x)}$.

\noindent
03. Set $i = 0$.

\noindent
04. While $i \le k$

\noindent
05. \hskip5mm
if $b_{i,p} < p/2$

\noindent
06. \hskip13mm
set $b_{i} = b_{i,p}$;

\noindent
07. \hskip5mm
else

\noindent
08. \hskip13mm
set $b_{i} = b_{i,p}-p$;

\noindent
10. \hskip5mm
set $i = i + 1$.

\noindent
11. Output $d (x) = b_{0} x^k+ \cdots +b_{k}$.
}
\end{Algorithm}

\vskip2mm
\subsection{Problem 3. Finding the correct fold of the modular $\gcd$ of right degree} 
$\phantom .$
\vskip-4mm

Now additionally assume the polynomials $f(x), g(x) \in \Z[x]$ to be primitive. Since $\cont{f(x)}$ and $\cont{g(x)}$ are defined up to the sign $\pm 1$,  we can without loss of generality admit the leading coefficients of $f(x), g(x)$ to be positive.  

Below, in Problem 4, we will see that for some $p$ the polynomial $e_p(x)$, computed by the Euclidean algorithm in $\Z_p[x]$, may not be the image of $d(x)$ and, moreover, its degree may be different from that of $d(x)$. This means that by applying Algorithm~\ref{Alg reconstruct} to $e_p(x)$ we may not obtain $d(x)$.  Assume, however, we have a $p$, which meets the condition $p > 2\cdot N_{f,g}$ and for which:
\begin{equation}
\label{degrees equal}
\deg d(x) = \deg e_p(x).
\end{equation}
By Corollary~\ref{highest degree is gcd for primitives} a common divisor of $f(x), g(x)$ is the $d(x)= \gcd\big(f(x), g(x)\big)$ if and only if it is primitive and if its degree is the maximum of degrees of all common divisors. Since $\varphi_p$ does not change the degree of $d(x)$, we get by Lemma~\ref{highest degree is gcd for fields} (applied for the field $K= \Z_p$) that $d_p(x)$ is a $\gcd$ of $f_p(x), g_p(x)$ in $\Z_p[x]$. This correspondence surely is not on-to-one, because in $\Z[x]$ the $\gcd$ is calculated up to the unit element of $\Z[x]$, which is $\pm 1$, whereas in  $\Z_p[x]$ the $\gcd$ is calculated up to the unit element of $\Z_p[x]$, which can be any non-zero number $t\in \Z^*_p=\{1,\ldots, p-1\}$. 
So the polynomial $e_p(x)$ calculated by the Euclidean algorithm may not be the image $d_p(x)$ of $d(x)$. 

\begin{Example}
For $f(x)=x^2+4x+3$ and $g(x)=x^2+2x+1$ whichever prime $p>4$ we take, we will get by the Euclidean algorithm:
$$
e_p(x) = \gcd\big(f_p(x), g_p(x)\big)=2x+2\in \Z_p[x]. 
$$
But in $\Z[x]$ we have $d(x)= x+1$. So regardless how large $p$ we choose, we will never get $\varphi_p(x+1)=2x+2$. 
\end{Example}

In other words, we are aware that the image $d_p(x)$ is one of the folds $t\cdot e_p(x)$ of $e_p(x)$ for some $t \in \{1,\ldots, p-1 \}$, but we are not aware which $t$ is that. 

The leading coefficient $c_0 = \lc{d(x)}$ of $d(x)$ can also be assumed to be positive. Denote by $w$ the positive $\gcd(a_0, b_0)$. Since both $c_0$ and $w$ are not altered by $\varphi_p$, their fraction $w/c_0$ also is not altered. Take such a $t$ that:
\begin{equation}
\label{fold}
\lc{t\cdot e_p(x)} = w.
\end{equation}
Even if $t\cdot e_p(x)$ is not the image $d_p(x)$,  it is the image of $l\cdot d(x)$, where $l$ divides $w/c_0$. If we calculate the pre-image $k(x)\in \Z[x]$ of $t\cdot e_p(x)$ by Algorithm~\ref{Alg reconstruct}, we will get a polynomial, which is either $d(x)$ or is some fold of $d(x)$. Since $f(x), g(x)$ are primitive, it remains to go to the primitive part $d(x) = \pp{k(x)}$.

The general case, when $f(x), g(x)$ may not be primitive, can easily be reduced to this: for arbitrary $f(x), g(x)$
take their decompositions by formula (\ref{decompositions}) and set
\begin{equation}
\label{r}
r=\gcd\big(\cont{f(x)}, \cont{g(x)}\!\big)\in \Z.
\end{equation}
Then assign $f(x)=\pp{f(x)}$, $g(x)=\pp{g(x)}$ and do the steps above for these new polynomials. After the $d(x) = \pp{k(x)}$ is computed, we get the final answer as $r\cdot d(x) = r\cdot \pp{k(x)}$

Notice that for Algorithm~\ref{Alg reconstruct} we need $p$ to be greater than any coefficient $|c_i|$ of the polynomial we reconstruct. The bound $p > 2\cdot N_{f,g}$ assures that $p$ meets this condition for $d(x)$. We, however, reconstruct not $d(x)$ but $l \cdot d(x)$, which may have larger coefficients. One could overcome this point by taking $p > w \cdot 2 \cdot N_{f,g}$ but this is not necessary because, as we see later, while the Big prime modular $\gcd$ algorithm works, the value of $p$ will grow and this issue will be covered.

\vskip2mm
\subsection{Problem 4. Finding the right degree for the modular $\gcd$} 
$\phantom .$

As we saw, one can reconstruct $d(x)$ if we find a $p > 2\cdot N_{f,g}$ such that the condition (\ref{degrees equal}) holds. Consider an example to see that (\ref{degrees equal}) may actually not hold for some $p$ even if $\varphi_p$ is not altering the coefficients of $f(x)$ and $g(x)$! 

\begin{Example}
For $f(x)=x^2+1$ and $g(x)=x+1$ we have $d(x) = \gcd\big(f(x), g(x)\big)=1$. Taking $p=2$ we get $f_2(x)=x^2+1$ and $g_2(x)=x+1$. In $\Z_2[x]$ we have $f_2 (x)=x^2+1=x^2+1^2=(x+1)(x+1)$, thus,  $e_2(x) = \gcd\big(f_2(x), g_2(x)\big)=x+1$. We get that $1 = \deg(x+1) > \deg(x) = 0$. In particular, whatever $t$ we take, $t \cdot (x+1)$ is not the image of $d(x)=1$ under $\varphi_2$.
\end{Example}

The idea to overcome this problem is to show that the number of primes $p$, for which (\ref{degrees equal}) falsifies, is ``small''. So if the selected $p$ is not suitable, we take another $p$ and do the calculation again by the new prime. And we will not have to repeat these steps for many times (we will turn to this point in Section~\ref{estimating the resultant}).

The proof of the following theorem and the definition of the resultant $\res {f(x),g(x)}$ (that is, of the determinant of the Sylvester matrix $S_{f,g}$ of polynomials $f(x),g(x)$) can be found, for example, in~\cite{Lang, MCA, Kostrikin, ImGirk}. The resultant is a comfortable tool to detect if the given polynomials are coprime:

\begin{Theorem}
\label{resultant zero}
Let $R$ be an integrity domain. The polynomials $f(x), g(x) \in R[x]$ are coprime if and only if $\res {f(x),g(x)} \not= 0$.
\end{Theorem}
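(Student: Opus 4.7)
My plan is to recast the condition $\res{f(x),g(x)}\neq 0$ as the invertibility of a certain $R$-linear map, and then reduce the question to linear algebra over the field of fractions $K=\mathrm{Frac}(R)$, which exists because $R$ is an integrity domain. The resultant is a polynomial expression in the coefficients of $f$ and $g$, so it takes the same value whether computed in $R$ or in $K$; this lets us transfer the question freely between the two.

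Set $n=\deg f(x)$ and $m=\deg g(x)$, and consider the $R$-module homomorphism
\begin{equation*}
\phi: R[x]_{<m}\oplus R[x]_{<n}\longrightarrow R[x]_{<n+m},\qquad \phi(u,v)=uf+vg,
\end{equation*}
where $R[x]_{<k}$ denotes the free $R$-module of polynomials of degree $<k$. In the monomial bases of source and target, the matrix of $\phi$ is (up to transposition) the Sylvester matrix $S_{f,g}$, so $\det\phi=\pm\,\res{f(x),g(x)}$. Tensoring with $K$ produces a $K$-linear endomorphism of an $(n+m)$-dimensional $K$-vector space, which is invertible iff $\res{f(x),g(x)}\neq 0$.

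For the implication \emph{``$f,g$ share a common factor $\Rightarrow\res{f,g}=0$''}, suppose $h(x)$ is a common divisor of positive degree, and write $f=h\tilde f$, $g=h\tilde g$. Then $\deg\tilde f<n$ and $\deg\tilde g<m$, while a direct calculation gives $\tilde g\cdot f-\tilde f\cdot g=0$. Thus $(\tilde g,-\tilde f)$ is a nonzero element of $\ker\phi$ that persists after passage to $K$, forcing $\det\phi=0$.

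For the converse, assume $\res{f(x),g(x)}=0$. Then the $K$-extension of $\phi$ has nontrivial kernel, so there exist $u(x),v(x)\in K[x]$, not both zero, with $\deg u<m$, $\deg v<n$, and $uf+vg=0$. Since $K[x]$ is an integrity domain, vanishing of one of $u,v$ would force the other to vanish as well, so both are nonzero. In the UFD $K[x]$ the equation $uf=-vg$ would, under the assumption that $f$ and $g$ are coprime in $K[x]$, force $f\mid v$; but $0<\deg v<\deg f$ makes this impossible. Hence $f$ and $g$ have a common factor of positive degree in $K[x]$, and by clearing denominators and applying Gauss's Lemma---exactly as in the discussion preceding Lemma~\ref{divides in Z and Q}---one recovers a common factor of positive degree in $R[x]$ as well. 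The subtlest point is precisely this last step, bridging coprimality in $R[x]$ and in $K[x]$; it is handled by the primitive-polynomial machinery already deployed in Section~\ref{degrees}.
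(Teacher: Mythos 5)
Your proof is the standard kernel-of-the-Sylvester-map argument; the paper itself gives no proof of Theorem~\ref{resultant zero} but only points to the literature, so there is no in-paper argument to diverge from, and what you wrote is essentially the proof found in the cited references. The core of both implications is handled correctly over the fraction field $K$: the map $\phi(u,v)=uf+vg$ has determinant $\pm\res{f(x),g(x)}$, a common factor of positive degree produces the explicit kernel vector $(\tilde g,-\tilde f)$, and conversely a nontrivial kernel over $K$ yields, via your divisibility argument in $K[x]$, a common factor of positive degree in $K[x]$.

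The gap is exactly the step you yourself flag as the subtlest, and it is not repaired by the appeal to Section~\ref{degrees}. Descending the common factor from $K[x]$ to $R[x]$ by ``clearing denominators and Gauss's Lemma'' requires $R$ to be a UFD (the paper develops contents and primitive parts only for $\Z[x]$), whereas the theorem is stated for an arbitrary integrity domain, and for such $R$ the descent genuinely fails. Take $R=\Z[2i]\subset\C$, $f(x)=x^2+1$, $g(x)=2x-2i$: then $\res{f(x),g(x)}=0$ since both vanish at $i$, yet $f$ and $g$ have no common divisor of positive degree in $R[x]$, because such a divisor would have unit leading coefficient (it divides the monic $f$) and would therefore force $\pm i\in R$. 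So either ``coprime'' must be read as coprime in $K[x]$ (no common factor of positive degree over the fraction field) --- in which case your proof is complete once the common factor in $K[x]$ is produced and the descent step should simply be deleted --- or the hypothesis must be strengthened to $R$ a UFD, which costs the paper nothing since the theorem is only ever applied with $R=\Z$ or $R=\Z_p$. A second, smaller definitional point applies to your forward implication as well: the resultant is blind to common factors of degree $0$ (e.g.\ $\res{2x,\,2x+2}=4\neq 0$ although $2$ divides both polynomials in $\Z[x]$), so in either reading ``coprime'' must mean ``no common divisor of positive degree''; this is indeed the only sense in which the theorem is used in Corollary~\ref{resultant fold p}, but it is worth stating explicitly.
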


The following fact in a little different shape can be found in~\cite{MCA} or~\cite{Davenport S T}: 

\begin{Corollary}
\label{resultant fold p}
If the prime $p$ does not divide at least one of the leading coefficients $a_0$, $b_0$ of polynomials, respectively, $f(x), g(x) \in \Z[x]$ then $\deg d(x) \le \deg e_p (x)$.
If $p$ also does not divide $R=\res{f(x)/d(x),g(x)/d(x)}$, where $d(x) = \gcd \big(f(x), g(x)\big)$, then 
\begin{equation}
\label{modular degrees equal}
 \deg d (x)=\deg d_p (x)=\deg \gcd\big(f_p (x),g_p (x)\big)=\deg e_p (x).
\end{equation}
\end{Corollary}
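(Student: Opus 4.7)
The plan is to lift the divisibility relations $d(x) \mid f(x)$ and $d(x) \mid g(x)$ from $\Z[x]$ through the ring homomorphism $\varphi_p$, so that $d_p(x)$ divides both $f_p(x)$ and $g_p(x)$ in $\Z_p[x]$, and then to exploit Lemma~\ref{highest degree is gcd for fields}, which characterises a gcd in the field-based polynomial ring $\Z_p[x]$ as a common divisor of maximal degree.

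For the first assertion, let $c_0$ denote the leading coefficient of $d(x)$. Because $d(x)$ divides both $f(x)$ and $g(x)$ in $\Z[x]$, the integer $c_0$ divides both $a_0$ and $b_0$, and in particular $c_0 \mid \gcd(a_0, b_0)$. By hypothesis $p$ does not divide at least one of $a_0, b_0$, so $p \nmid c_0$, and consequently $\varphi_p$ preserves the leading coefficient of $d(x)$, giving $\deg d_p(x) = \deg d(x)$. Since $d_p(x)$ is then a common divisor of $f_p(x), g_p(x)$ in $\Z_p[x]$ and $e_p(x)$ is a gcd there, Lemma~\ref{highest degree is gcd for fields} yields $\deg d(x) = \deg d_p(x) \le \deg e_p(x)$.

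For the second assertion, set $F(x) = f(x)/d(x)$ and $G(x) = g(x)/d(x)$ in $\Z[x]$, so that $\gcd(F,G) \approx 1$. Reducing $f = d F$ and $g = d G$ modulo $p$ gives $f_p = d_p F_p$ and $g_p = d_p G_p$ in $\Z_p[x]$, whence $e_p(x) \approx d_p(x) \cdot \gcd(F_p(x), G_p(x))$ up to a unit of $\Z_p^*$. It therefore suffices to show that $F_p$ and $G_p$ are coprime in $\Z_p[x]$, for then $\deg e_p = \deg d_p = \deg d$. By Theorem~\ref{resultant zero} this coprimality is equivalent to $\res{F_p(x), G_p(x)} \neq 0$ in $\Z_p$. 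The entries of the Sylvester matrix are coefficients of $F$ and $G$, so $\varphi_p$ commutes with its formation, and determinant commutes with ring homomorphisms; this gives $\res{F_p, G_p} = \varphi_p\bigl(\res{F, G}\bigr) = \varphi_p(R)$, which is non-zero precisely because $p \nmid R$.

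The main technical obstacle is making this last identification airtight: the standard resultant depends on the declared degrees of its arguments, and if a leading coefficient of $F$ or $G$ were killed by $\varphi_p$, the reduced Sylvester matrix would compute the resultant of the wrong formal degrees. One therefore needs to check that no such degree drop interferes, using the identities $\lc{F} = a_0/c_0$ and $\lc{G} = b_0/c_0$ together with $c_0 \mid \gcd(a_0, b_0)$ established above and the hypothesis on $a_0, b_0$. This bookkeeping, rather than any deep step, is the only non-routine piece of the argument.
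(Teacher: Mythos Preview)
Your proof is correct and follows essentially the same route as the paper's: preserve $\deg d$ via $p \nmid c_0$, factor $e_p \approx d_p \cdot \gcd(F_p, G_p)$, and then use that $\varphi_p$ commutes with the Sylvester determinant to obtain $\res{F_p, G_p} = \varphi_p(R) \neq 0$. You are in fact more careful than the paper about the degree-drop subtlety in the Sylvester matrix; the paper simply asserts $\varphi_p(S_{f/d,\,g/d}) = S_{f_p/d_p,\,g_p/d_p}$ without flagging the issue.
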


\begin{proof}
Since $c_0 = \lc{d(x)}$ divides $w= \gcd (a_0, b_0)$, then  $\varphi_p(c_0) \not= 0$ by the choice of $p$. Thus, $\deg d (x)=\deg d_p (x) \le \deg \gcd\big(f_p (x),g_p (x)\big)$.

Since  $d_p(x)\not= 0$,  we can consider the fractions $f_p(x)/d_p(x)$ and $g_p(x)/d_p(x)$ in $\Z_p[x]$. From unique factorizations of $f_p (x)$ and $g_p (x)$ in UFD $\Z_p[x]$ it is very easy to deduce that
$$
e_p(x)
\approx \gcd\big(f_p (x),g_p (x)\big) 
\approx d_p (x) \cdot \gcd\big(f_p (x)/d_p (x),g_p (x)/d_p (x)\big).
$$
In particular, $\deg d(x) = \deg d_p(x) \le \deg e_p (x)$.
And the inequality $\deg d_p (x) \not= \deg e_p(x)$ may occur only if 
$$
\deg \gcd\big(f_p (x)/d_p (x),g_p (x)/d_p (x)\big) >0,
$$
that is, when $f_p(x)/d_p(x)$ and $g_p(x)/d_p(x)$ are not coprime in $\Z_p[x]$ or, by Theorem~\ref{resultant zero}, when $\res{f_p (x)/d_p (x),g_p (x)/d_p (x))}=0$. The latter is the determinant of Sylvester matrix $S_{f_p/d_p, \, g_p/d_p}$. 
Consider the matrix rings homomorphism (matrix modular reduction)
$$
\varphi_p: M_{m+n}(\Z) \to M_{m+n}(\Z_p),
$$
where $n = \deg f (x)$, $m = \deg g (x)$ (as mentioned earlier we use the same symbol $\varphi_p$ for numeric, polynomial and matrix reductions). Since, $\varphi_p (S_{f/d, \, g/d}) = S_{f_p/d_p, \, g_p/d_p}$, and since the determinant of a matrix is a sum of products of its elements, we get
$$
R_p = 
\varphi_p (R) =
\varphi_p \Big(
\res{f(x)/d(x),g(x)/d(x)}
\Big) =
\res{f_p (x)/d_p (x),g_p (x)/d_p (x))}.
$$
So $R_p$ can be zero  if and only if $R$ is divisible by $p$. 
The polynomials $f(x)/d(x)$ and $g(x)/d(x)$ are coprime in $\Z[x]$ and their resultant is not zero by Theorem~\ref{resultant zero}. And $R$ cannot be a positive integer divisible by $p$ since that contradicts the condition of this corollary.
\end{proof}

Corollary~\ref{resultant fold p} shows that if for some $p$ the equality (\ref{degrees equal}) does not hold for  polynomials $f(x), g(x) \in \Z[x]$, then $p$ divides either $a_0$ and $b_0$, or it divides the resultant $R$. We do not know $R$, since we do not yet know $d(x)$ to calculate the resultant $R=\res{f(x)/d(x),g(x)/d(x)}$. But, since the number of such primes $p$ is just finite, we can arrive to the right $p$ after trying the process for a few primes. We will turn to this again in Section~\ref{estimating the resultant}.

\section{The Big prime modular $\gcd$ algorithm}
\label{The big prime algorithm}

Four steps of the previous section provide us with the following procedure. We keep all the notations from Section~\ref{four problems}. Take the primitive polynomials $f(x), g(x) \in \Z[x]$. Without loss of generality we may assume $a_0, b_0 > 0$. 
Take any $p > 2 \cdot N_{f,g}$. Then $p \nmid w= \gcd (a_0, b_0)$, since $a_0, b_0 \le N_{f,g}$.
Calculate $e_p(x)=\gcd\big(f_p (x),g_p (x)\big)$ in $\Z_p[x]$ by Euclidean algorithm. Then choose $t$ so that (\ref{fold}) holds. 
Construct $k(x)$ applying  Algorithm~\ref{Alg reconstruct} to $t \cdot e_p(x)$. If the primitive part $d(x)= \pp{k(x)}$ divides both $f(x)$ and $g(x)$, then the $\gcd$ for these primitive polynomials if found: $d(x)= \gcd\big(f(x), g(x)\big)$. That follows from consideration about divisor degrees above: if $f(x), g(x)$ had a common divisor $h(x)$ of degree greater than $\deg d(x)$, then, since the degree of $h(x)$ is not altered by $\varphi_p$, we would get $\deg h_p(x)  > \deg d_p(x) = \deg d(x)= \deg e_p(x)$, which contradicts the maximality of $\deg d_p(x)$ by Lemma~\ref{highest degree is gcd for fields}. 

This means that if for $p > 2 \cdot N_{f,g}$ we get $d(x) \nmid f(x)$ or $d(x) \nmid g(x)$,  we have the case when $p$ divides the resultant $R$. Then we just ignore the calculated polynomial, choose another $p > 2 \cdot N_{f,g}$ and redo the steps for it. Repeating these steps for finitely many times, we will eventually arrive to the correct $d(x)$ for the primitive polynomials $f(x), g(x)$. 

The case of arbitrary non-zero polynomials can easily be reduced to this. By arguments mentioned earlier: we should calculate $d(x)$ for primitive polynomials  $\pp{f(x)}$ and $\pp{g(x)}$, and then output the final answer as $r \cdot d(x)$, where $r$ is defined by (\ref{r}). The process we described is the traditional form of the Big prime modular $\gcd$ algorithm.

\begin{Remark}
\label{my remark}
Since our approach in Section~\ref{four problems} evolved the maximality of degrees of the common divisors, we can shorten some of the steps of our algorithm. 
Let us store in a variable, say, $D$ the minimal value for which we already know it is not the $\deg \gcd\big(f(x), g(x)\big)$. As an initial $D$ we may take, say, $D = \min\{\deg f(x), \deg g(x)\}+1$.
Each time we calculate $e_p(x) = \gcd\big(f_p (x),g_p (x)\big)$, check if $\deg e_p(x)$ is equal to or larger than the current $D$.  If yes, we already know that we have an ``inappropriate'' $p$. Then we no longer need use Algorithm~\ref{Alg reconstruct} to reconstruct $k(x)$ and to get $d(x)= \pp{k(x)}$. We just skip these steps and proceed to the next $p$.
Reconstruct $d(x)$ and check if $d(x) | f(x)$ and $d(x) | g(x)$ {\it only} when $\deg e_p(x) < D$. 
Then, if $d(x)$ does not divide $f(x)$ or $g(x)$, we have discovered a new bound $D$ for $\deg \big(\gcd(f(x), g(x)\big)$. So set $D = \deg e_p(x)$ and proceed to the next $p$. If in next step we get $\deg{e_p(x)}\ge D$, we will again be aware that the steps of reconstruction of $d(x)$ need be skipped.
\end{Remark}

We constructed the following algorithm:

\begin{Algorithm}[Big prime modular $\gcd$ algorithm]
\label{BPN}
{\it $\phantom{.}$
\\
Input: non-zero polynomials $f(x), g(x) \in \Z[x]$.\\
Calculate their greatest common divisor $\gcd\big(f(x), g(x)\big)\in \Z[x]$.
}

{\parskip=1mm

\noindent
{\parskip=0mm
01. Calculate $\cont{f(x)}$, $\cont{g(x)}$ in Euclidean domain $\Z$, choose their signs 

\hskip3mm
so that $\sgn{\cont{f(x)}} = \sgn{\lc{f(x)}}$
and $\sgn{\cont{g(x)}} = \sgn{\lc{g(x)}}$.
}

\noindent
02. Set $f(x)= \pp{f(x)}$ and $g(x)= \pp{g(x)}$.

\noindent
03. Calculate $r$ in Euclidean domain $\Z$ by (\ref{r}).

\noindent
{\parskip=0mm
04. Set $a_0= \lc{f(x)}$ and $b_0= \lc{g(x)}$ (they are positive by our selection of signs 

\hskip3mm
for $\cont{f(x)}$ and $\cont{g(x)}$).
}

\noindent
05. Calculate the positive $w=\gcd(a_0,b_0)$ in Euclidean domain $\Z$.

\noindent
06. Set $D = \min\{\deg f(x), \deg g(x)\}+1$.

\noindent
07. Compute the Landau-Mignotte bound $N_{f,g}$ by (\ref{N_fg}).

\noindent
08. Choose a new prime number $p > 2 \cdot N_{f,g}$.

\noindent
09. Apply the reduction $\varphi_p$ to calculate the modular images $f_p (x), g_p (x) \in \Z_p [x]$.

\noindent
10. Calculate $e_p(x) = \gcd\big(f_p (x), g_p (x)\big)$ in the Euclidean domain $\Z_p[x]$.

\noindent
11. If $D \le \deg e_p(x)$

\noindent
12. \hskip5mm 
go to step 08;

\noindent
13. else

\noindent
14.  \hskip5mm
choose a $t$ such that the $\lc{t \cdot e_p(x)} = w$;

\noindent
15.  \hskip5mm
call Algorithm~\ref{Alg reconstruct} to calculate the preimage $k(x)$ of $t \cdot e_p(x)$;

\noindent
16.  \hskip5mm
calculate $\cont {k(x)}$ in Euclidean domain $\Z$;

\noindent
17.  \hskip5mm
set $d(x)= \pp{k(x)} = k(x)/ \cont {k(x)}$;

\noindent
18. \hskip13mm 
if $d(x) | f(x)$ and $d(x) | g(x)$

\noindent
19. \hskip22mm 
go to step 23;

\noindent
20. \hskip13mm 
else

\noindent
21. \hskip22mm 
set $D = \deg e_p(x)$;

\noindent
22. \hskip22mm 
go to step 08.

\noindent
23. Output the result: $\gcd\big(f(x), g(x)\big) = r \cdot d(x)$.
}
\end{Algorithm}

\vskip1mm

Turning back to Remark~\ref{my remark}, notice that  for some prime numbers $p$ we skip the steps 14 -- 18 of Algorithm~\ref{BPN}, and directly jump to the step 08. In fact, Remark~\ref{my remark} has mainly theoretical purpose to display how usage of UFD properties and comparison of divisor degrees may reduce some of the steps of the Big prime modular $\gcd$ algorithm. In practical examples the set of primes we use contains few primes dividing $R = \res{f(x)/d(x), g(x)/d(x)}$, so we may not frequently get examples where the steps 14 -- 18 are skipped. 

\begin{Example}
\label{apply BPA on Knuth}
Let us apply Algorithm~\ref{BPN} to polynomials (\ref{Knuth}) mentioned in Knuth's example above. Since $\lVert f(x) \rVert = \sqrt{113}$ and $\lVert g(x) \rVert = \sqrt{570}$,
$$
N_{f,g}=
 2^{\min\{8,6\}} \cdot \gcd(1,3) \cdot
\min
\left\{
{{\sqrt{113}}\over{1}},
{{\sqrt{570}}\over{3}}
\right\}<512. 
$$
And we can take the prime $p = 1031 > 2 \cdot N_{f,g}$. It is not hard to compute that $\gcd\big(f_{1031} (x), g_{1031} (x)\big) \approx 1$. So $f(x)$ and $g(x)$ are coprime. It is worth to compare $p = 1031$ with much smaller values $p=67$ and $p=37$ obtained below for the same polynomials (\ref{Knuth}) in Example~\ref{apply modified on Knuth} using the modified Algorithm~\ref{BPN-e}. 
\end{Example}

In~\cite{ImGirk} we also apply Algorithm~\ref{BPN} to other polynomials with cases when the polynomials are not coprime.

\section{Estimating the prime divisors of the resultant}
\label{estimating the resultant}

Although at the start of the Big prime modular $\gcd$ algorithm
we cannot compute the resultant $R=\res{f(x)/d(x),g(x)/d(x)}$ 
for the given $f(x),g(x) \in \Z[x]$
(we do not know $d(x)$), we can nevertheless estimate the value of $R$ and the number of its prime divisors. Denote:
\begin{equation}
\label{estimate resultant bound}
\begin{array}{ll} 
 \,\,\,\,\,\,\, 
A_{f,g}   & \!\!\! =
\sqrt{
(n+1)^m (m+1)^n  
}
\cdot N_f^m N_g^n \\ 
  & 
\!\!\! = 
2^{2nm - n - m}\,
\vphantom{ b^{b^{b^b}} }
\sqrt{
(n+1)^m (m+1)^n  
}
\cdot 
{\lVert f(x) \rVert}^m 
{\lVert g(x) \rVert}^n . 
\end{array}
\end{equation}

\begin{Lemma}
\label{estimate resultant}
For any polynomials $f(x), g(x) \in \Z[x]$ and for any of their common divisors $d(x)$ the following holds:
$$ 
|\,\res{f(x)/d(x),g(x)/d(x)}| = |S_{f/d,\, g/d}|  \le A_{f,g}.
$$
\end{Lemma}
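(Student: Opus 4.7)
The plan is to invoke Hadamard's inequality on the Sylvester matrix $S_{f/d,\,g/d}$ and then bound the row norms using the Landau--Mignotte estimate (Corollary~\ref{N_f corollary}), exploiting the fact that the cofactors $f/d$ and $g/d$ are themselves divisors of $f$ and $g$ respectively.

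Set $u(x)=f(x)/d(x)$ and $v(x)=g(x)/d(x)$, and let $n'=\deg u\le n$, $m'=\deg v\le m$. Then $S_{u,v}$ is a square matrix of order $n'+m'$, whose first $m'$ rows are shifted copies of the coefficient vector of $u$ (so each has Euclidean norm equal to $\lVert u(x)\rVert$), and whose last $n'$ rows are shifted copies of the coefficient vector of $v$ (each of norm $\lVert v(x)\rVert$). Hadamard's inequality then gives
$$|\det S_{u,v}|\ \le\ \lVert u(x)\rVert^{m'}\cdot\lVert v(x)\rVert^{n'}.$$

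The key step is to bound $\lVert u(x)\rVert$ and $\lVert v(x)\rVert$. Because $f(x)=d(x)\cdot u(x)$, the polynomial $u(x)$ is a divisor of $f(x)$, so by Corollary~\ref{N_f corollary} each of its (at most $n'+1$) coefficients is bounded in absolute value by $N_f$. Hence
$$\lVert u(x)\rVert\ \le\ \sqrt{n'+1}\cdot N_f\ \le\ \sqrt{n+1}\cdot N_f,$$
and symmetrically $\lVert v(x)\rVert\le \sqrt{m+1}\cdot N_g$. Substituting,
$$|\det S_{u,v}|\ \le\ \bigl(\sqrt{n+1}\cdot N_f\bigr)^{m'}\bigl(\sqrt{m+1}\cdot N_g\bigr)^{n'}.$$

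Finally, I would pass from the exponents $m',n'$ to $m,n$. Since $N_f=2^{n-1}\lVert f(x)\rVert\ge 1$ and $N_g=2^{m-1}\lVert g(x)\rVert\ge 1$ for nonconstant $f,g\in\Z[x]$ (the degenerate cases $n=0$ or $m=0$ being immediate), each base is at least $1$, so enlarging the exponents can only increase the right-hand side, yielding $|\det S_{u,v}|\le \sqrt{(n+1)^m(m+1)^n}\cdot N_f^m N_g^n = A_{f,g}$. The equivalence with the second form of $A_{f,g}$ in (\ref{estimate resultant bound}) is just the expansion $N_f^m N_g^n = 2^{m(n-1)+n(m-1)}\lVert f(x)\rVert^m\lVert g(x)\rVert^n = 2^{2nm-n-m}\lVert f(x)\rVert^m\lVert g(x)\rVert^n$. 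The only real care needed is the inequality $N_f,N_g\ge 1$ justifying the exponent replacement; the rest is mechanical application of Hadamard and the already-established coefficient bound.
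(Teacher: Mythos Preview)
Your proof is correct and follows essentially the same route as the paper: bound the coefficients of $f/d$ and $g/d$ via Corollary~\ref{N_f corollary}, translate these into bounds on $\lVert f/d\rVert$ and $\lVert g/d\rVert$, and then apply Hadamard's inequality to the Sylvester matrix. You are in fact slightly more careful than the paper, which writes the Hadamard bound directly with exponents $m,n$ without commenting on the passage from the true degrees $m',n'$; your explicit justification that the bases $\sqrt{n+1}\,N_f$ and $\sqrt{m+1}\,N_g$ are at least $1$ (for nonconstant $f,g$) fills exactly that gap.
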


\begin{proof}
By Corollary~\ref{N_f corollary} the coefficients of fractions $f(x)/d(x)$ and $g(x)/d(x)$ are bounded, respectively, by $N_f=2^{n-1} \lVert f(x) \rVert$ and $N_g=2^{m-1} \lVert g(x) \rVert$,
where $n = \deg{f(x)}$, $m = \deg{g(x)}$. 
Since the numbers of summands in these fractions are at most $n+1$ and $m+1$, respectively, we get:
$$
\text{
$
\lVert f(x)/d(x) \rVert \le 
\sqrt{(n+1) \, N_f^2}
$
\,\, and \,\,
$
\lVert g(x)/d(x) \rVert \le 
\sqrt{(m+1) \, N_g^2}.
$
}
$$
Applying the Hadamard's maximal determinant bound~\cite{MCA} to the Sylvester matrix $S_{f/d,\, g/d}$, we get that 
$$
|R|= |S_{f/d,\, g/d}| \le 
\left(\! \sqrt{(n+1) }\, N_f \right)^{\! m}
\!\!\! \cdot
\left(\! \sqrt{(m+1) }\, N_g \right)^{\! n}  \!\!\!.
$$
\end{proof}

The bound of (\ref{estimate resultant bound}) is very rough. To see this apply it to the polynomials (\ref{Knuth}) of Knuth's example:
\begin{Example}
\label{apply bound to Knuth}
For polynomials (\ref{Knuth}) we have 
$\lVert f(x \rVert = \sqrt{113}$ and 
$\lVert g(x \rVert = \sqrt{570}$.
So we can estimate $N_{f}< 1408$, $N_{g}< 768$ and $N_{f,g}< 512$. Thus:
$$
|R| \le \sqrt{(8+1)^6 (6+1)^8} \cdot 1408^6 \cdot  570^8 = \omega =
1.6505374299582118582810249858265e+48,
$$
which is a too large number to comfortably operate with.
\end{Example}

\begin{Remark}
\label{why not to chose}
If in  Algorithm~\ref{BPN} we use a prime 
\begin{equation}
\label{too big bound}
p > 2 \cdot A_{f,g},
\end{equation}
then we will get that $p \nmid R = \res{f(x)/d(x),g(x)/d(x)}$ whatever the greatest common divisor $d(x)$ be. 
And, clearly,  $p \nmid w$  holds for $w = \gcd(a_0,b_0)$.
So in this case Algorithm~\ref{BPN} will output the correct $\pp{k(x)}$ using just one $p$, and we will not have to take another $p \nmid w$ after step 18. However, Example~\ref{apply bound to Knuth} shows why it is {\it not} reasonable to chose $p$ by the rule (\ref{too big bound}) to have in  Algorithm~\ref{BPN} one cycle only: it is easier to go via a few cycles for smaller $p$'s rather than to operate with a huge $p$, which is two times larger than the bound $\omega$ obtained in Example~\ref{apply bound to Knuth}.
\end{Remark}

Nevertheless, the bound $A_{f,g}$ may be useful if we remember that the process in Algorithm~\ref{BPN} concerned not the value of $\res{f(x)/d(x),g(x)/d(x)}$ but the {\it number of its distinct prime divisors}.
Let us denote by $p_k\#$ the product of the first $k$ primes: $p_k\# = p_1 \cdot p_2 \cdots p_k$ (where $p_1=2$, $p_2=3$, etc.). They sometimes call $p_k\#$ the {\it ``$k$'th primorial''}. 
The following is essential:

\begin{Lemma}
\label{primorial is largest}
The number of pairwise distinct prime divisors of a positive integer $n$ is less or equal to $\max \left\{k \, | \,  p_k\# \le n \right\}$.
\end{Lemma}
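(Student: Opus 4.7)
The plan is to argue by comparing the prime factorization of $n$ with the primorial $p_s\#$, where $s$ is the number of distinct prime divisors of $n$. The key observation is simply that among any $s$ distinct primes, the product is minimized when they are the first $s$ primes.

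More precisely, I would first let $s$ denote the number of pairwise distinct prime divisors of $n$ and list them in increasing order as $q_1 < q_2 < \cdots < q_s$. Since $q_1, q_2, \ldots, q_s$ are $s$ distinct primes in increasing order, a straightforward induction (or a direct comparison with the enumeration of all primes) gives $q_i \ge p_i$ for every $i = 1, \ldots, s$, where $p_i$ denotes the $i$-th prime. Multiplying these inequalities yields
\[
q_1 q_2 \cdots q_s \ge p_1 p_2 \cdots p_s = p_s\#.
\]

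Next, I would use the unique factorization of $n$ in $\Z$: since every prime divisor of $n$ appears among the $q_i$ with exponent $\alpha_i \ge 1$, we have $n = q_1^{\alpha_1} \cdots q_s^{\alpha_s} \ge q_1 \cdots q_s \ge p_s\#$. Therefore $s$ lies in the set $\{k \mid p_k\# \le n\}$, whence
\[
s \le \max \left\{ k \, \mid \, p_k\# \le n \right\},
\]
which is the claimed inequality.

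I do not foresee any real obstacle: the only step requiring a line of justification is the bound $q_i \ge p_i$, which is immediate from the fact that $p_1 < p_2 < \cdots$ enumerates \emph{all} primes in increasing order, so the $i$-th term of any strictly increasing sequence of primes cannot be smaller than $p_i$. Everything else is a single multiplication and the definition of $\max$.
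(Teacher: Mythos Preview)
Your argument is correct and is exactly the standard proof of this elementary fact: if $n$ has $s$ distinct prime divisors $q_1<\cdots<q_s$, then $q_i\ge p_i$ for each $i$, hence $n\ge q_1\cdots q_s\ge p_s\#$, so $s$ belongs to $\{k\mid p_k\#\le n\}$ and is therefore at most its maximum. The paper itself states this lemma without proof (it is introduced only with the phrase ``The following is essential''), so there is no alternative argument to compare against; your write-up simply supplies the natural justification the paper leaves implicit.
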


From Lemma~\ref{estimate resultant} and Lemma~\ref{primorial is largest} easily follows:

\begin{Corollary}
\label{primes dividing resultant}
For any polynomials $f(x), g(x) \in \Z[x]$ and for any of their common divisors $d(x)$ the number of pairwise distinct prime divisors of $\res{f(x)/d(x),g(x)/d(x)}$ is at most $k$, where $k$ is the largest number for which $p_k\# \le A_{f,g}$.
\end{Corollary}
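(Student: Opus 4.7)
The plan is to derive the corollary by directly combining the two preceding results, using only the observation that the product of the first $k$ primes is minimal among all products of $k$ pairwise distinct primes. I would begin by setting $R = \res{f(x)/d(x),g(x)/d(x)}$ and letting $r$ denote the number of pairwise distinct prime divisors of $R$ (assuming $R \ne 0$; the case $R=0$ is vacuous since then the statement imposes no restriction, or can be excluded using Theorem~\ref{resultant zero} once we remember that $f/d$ and $g/d$ are coprime in $\Z[x]$, whence $R \ne 0$). The goal is then to show $r \le k$, where $k$ is as in the statement.

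The main step is an elementary comparison. Let $q_1 < q_2 < \cdots < q_r$ be the distinct prime divisors of $R$, and let $p_1 < p_2 < \cdots$ enumerate all primes in increasing order. Since $p_i \le q_i$ for each $i = 1, \ldots, r$, we obtain
\[
p_r\# = p_1 p_2 \cdots p_r \; \le \; q_1 q_2 \cdots q_r \; \le \; |R|.
\]
This is exactly the content of Lemma~\ref{primorial is largest} rephrased for our purposes. Now Lemma~\ref{estimate resultant} supplies the bound $|R| \le A_{f,g}$, and chaining the two inequalities gives $p_r\# \le A_{f,g}$. By the definition of $k$ as the largest index satisfying $p_k\# \le A_{f,g}$, we conclude $r \le k$, as required.

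I do not anticipate any real obstacle here: the proof is a two-line chain of inequalities once Lemmas~\ref{estimate resultant} and~\ref{primorial is largest} are in hand. The only subtle point worth spelling out is why $p_r\# \le q_1 \cdots q_r$; this relies on the elementary fact that any strictly increasing sequence of $r$ primes term-wise dominates $p_1, \ldots, p_r$, which is immediate by induction on $r$ from the well-ordering of the primes. Everything else is bookkeeping.
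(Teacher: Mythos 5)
Your proposal is correct and matches the paper's own derivation: the paper simply states that the corollary follows from Lemma~\ref{estimate resultant} and Lemma~\ref{primorial is largest}, which is precisely the two-inequality chain $p_r\# \le |R| \le A_{f,g}$ you write out (your termwise-domination argument is in effect a proof of Lemma~\ref{primorial is largest} itself). Your side remark on the degenerate case $R=0$ is a reasonable extra precaution that the paper does not bother with.
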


Primorial (as a function on $k$) grows very rapidly. Say, for $k=10$ it is more than six billions: $p_{10}\# = 6,469,693,230$. This observation allows to use the bound $A_{f,g}$ in the following way: 
although the value of $A_{f,g}$ as a function on $n=\deg{f(x)}$,  $m=\deg{g(x)}$ and on the coefficients of $f(x)$ and $g(x)$ grows rapidly, the number of its distinct prime divisors, may not be ``very large'' thanks to the fact that $p_k\#$ also grows rapidly. Consider this on polynomials and values from Example~\ref{apply bound to Knuth}:

\begin{Example}
\label{enough is 31}
It is easy to compute that:
$$
p_{30}\#=3.1610054640417607788145206291544e+46 < \omega
$$
and
$$
p_{31}\#=4.014476939333036189094441199026e+48 > \omega,
$$
where $\omega$ is the large number from Example~\ref{apply bound to Knuth}. This means that the number of prime divisors of $R = \res{f(x)/d(x),g(x)/d(x)}$, whatever the divisor $d(x)$ be, is not greater than $30$. And whichever $30+1 = 31$ distinct primes we take, at least one of them will {\it not} be a divisor of $R$. That is, Algorithm~\ref{BPN} for the polynomials of Knuth's example will output the correct answer in not more than $31$ cycles. We {\it cannot} find $31$ primes $p \nmid w$ so that Algorithm~\ref{BPN} arrives to a wrong $d(x)= \pp{k(x)}$ on step 18 for all of them.
\end{Example}

\begin{Remark}
\label{Let us stress}
Let us stress that estimates on the number of prime divisors of the resultant and the analog of Algorithm~\ref{coprime algoritm} can be found elsewhere, for example, in~\cite{MCA}. So the only news we have is that here we use a slightly better value for $N_f$ and $N_g$ to get $2^{n+m}$ times smaller bound for $A_{f,g}$. Namely, in Corollary~\ref{N_f corollary} we estimate $| c_i |$ not by $2^{n} \lVert f(x) \rVert$ but by $2^{n-1} \lVert f(x) \rVert$ (see (\ref{N_f}) and Remark~\ref{smaller degree}). 
This makes the bound $A_{f,g}$ in formula (\ref{estimate resultant bound}) $2^{n+m}$ times lower, since $N_f$ and $N_g$ appear $m$ and $n$ times respectively.
\end{Remark}

\section{An algorithm to check  coprime polynomials}
\label{coprime polynomials}

The first application of the bounds found in previous section is an algorithm checking if the given polynomials $f(x), g(x) \in \Z[x]$ are coprime. Present the polynomials as $f(x)= \cont{f(x)} \cdot \pp{f(x)}$ and $g(x)= \cont{g(x)} \cdot \pp{g(x)}$. If $r = \gcd \big(\cont{f(x)},\cont{g(x)}\big) \not\approx 1$, then $f(x), g(x)$ are not coprime, and we do not have to check the primitive parts, at all.

If $r \approx 1$, then switch to the polynomials $f(x)= \pp{f(x)}$ and $g(x)= \pp{g(x)}$. 
By Corollary~\ref{primes dividing resultant} the number of distinct prime divisors of $\res{f(x)/d(x),g(x)/d(x)}$ is less or equal to $k$, where $k$ is the largest number for which $p_k\# \le A_{f,g}$.

Consider any $k+1$ primes $p_1, \ldots, p_{k+1}$, each not dividing $w = \gcd(a_0, b_0)$, where $a_0 = \lc{f(x)}$ and $b_0 = \lc{g(x)}$. 
If $\gcd \big(f_{p_i}(x), g_{p_i}(x)\big)=1$ for at least one $p_i$, then $f(x)$ and $g(x)$ are coprime because $0 = \deg \gcd \big(f_{p_i}(x), g_{p_i}(x)\big)\ge \deg \gcd \big(f(x), g(x)\big)$ and $r \approx 1$.

And if $\gcd \big(f_{p_i}(x), g_{p_i}(x)\big) \not= 1$ for all $i = 1, \ldots, k+1$, then $f_{p_i}(x)$ and $g_{p_i}(x)$ are not coprime for at least one $p_i$, which is not dividing $\res{f(x)/d(x),g(x)/d(x)}$. This means that $f(x)$ and $g(x)$ are not coprime. We got the following algorithm:

\begin{Algorithm}[Coprime polynomials detection modular algorithm]
\label{coprime algoritm} 
{\it $\phantom{.}$
\\
Input: non-zero polynomials $f(x), g(x) \in \Z[x]$.\\
Detect if $f(x)$ and $g(x)$ are coprime.
}

{\parskip=1mm

\noindent
01. Calculate $\cont{f(x)}$, $\cont{g(x)}$ in Euclidean domain $\Z$.

\noindent
02. Calculate $r$ in Euclidean domain $\Z$ by (\ref{r}).

\noindent
03. If $r \not\approx 1$

\noindent
04. \hskip5mm 
output the result: $f(x)$ and $g(x)$ are not coprime and stop.

\noindent
05. Set $a_0= \lc{f(x)}$ and $b_0= \lc{g(x)}$.

\noindent
06. Calculate $w=\gcd(a_0,b_0)$ in Euclidean domain $\Z$.

\noindent
07. Set $f(x)= \pp{f(x)}$ and $g(x)= \pp{g(x)}$.

\noindent
08. Compute the bound $A_{f,g}$ for polynomials $f(x), g(x)$ by (\ref{estimate resultant bound}).

\noindent
09. Find the maximal $k$ for which $p_k\# \le A_{f,g}$.

\noindent
10. Set $i=1$.

\noindent
11. While $i \not= k+1$ 

\noindent
12. \hskip5mm 
choose a new prime $p \nmid w$;

\noindent
13. \hskip5mm 
apply the reduction $\varphi_{p}$ to calculate the modular images $f_{p} (x), g_{p} (x) \in \Z_{p} [x]$;

\noindent
14. \hskip5mm 
calculate $e_{p} = \gcd\big(f_{p} (x), g_{p} (x)\big)$ in Euclidean domain $\Z_p[x]$;

\noindent
15. \hskip13mm 
if $\deg e_{q_i} = 0$

\noindent
16. \hskip22mm 
output the result: $f(x)$ and $g(x)$ are coprime and stop.

\noindent
17. \hskip5mm
set $i=i+1$.

\noindent
18. If $i < k+1$ 

\noindent
19. \hskip5mm 
go to step 12.

\noindent
20. else

\noindent
21. \hskip5mm 
output the result: $f(x)$ and $g(x)$ are not coprime.
}
\end{Algorithm}

\vskip3mm
Two important advantages of this algorithm are that here we use much smaller primes $p$ (we just require $p \nmid w$, not $p > 2 \cdot N_{f,g}$), and in Algorithm~\ref{coprime algoritm}, unlike in Algorithm~\ref{BPN}, we never need to find $t$, to compute the preimage $k(x)$ of $t\cdot \gcd\big(f_p (x), g_p (x)\big)$ and the primitive part $\pp{k(x)}$.

\begin{Remark}
\label{Knuth sais}
As it is mentioned by Knuth in~\cite{Knuth 2}, in a probabilistic sense the polynomials are much more likely to be coprime than the integer numbers. So it is reasonable to first test by Algorithm~\ref{coprime algoritm} if the given polynomials $f(x), g(x)$ are coprime, and only after that apply Algorithm~\ref{BPN} to find their $\gcd$ in case if they are not coprime. See also Algorithm~\ref{BPN+}, where we combine both these approaches with a better bound for prime $p$.
\end{Remark}

\begin{Example}
\label{Knuth for p=2}
Apply Algorithm~\ref{coprime algoritm} to polynomials (\ref{Knuth}) from Knuth's example. As we saw in Example~\ref{enough is 31}, $k=30$.
For $p=2$ we get $f_2(x)=x^8+x^6+x^4+x^3+1$, $g_2(x)=x^6+x^4+x+1$, which are not coprime, since $\gcd\big(f_2 (x), g_2 (x)\big)=x^2+x+1\not= 1$
And for $p=3$ we get $f_3(x)=x^8+x^6+2x^2+2x+1$, $g_3(x)=2x^4+2x^2$, which are coprime. So $\gcd\big(f(x), g(x)\big)=1$
\end{Example}

\begin{Example}
If $f(x)=x^2+2x+1$ and $g(x)=x+1$. Then $N_f=2\sqrt{6}$, $N_g=\sqrt{2}$ and $A_{f, g}<39$. Since $2 \cdot 3 \cdot 5 \cdot 7 = 210 > 39$, we get that $k=3$, and $\gcd\big(f(x), g(x)\big)\not= 1$  if $\gcd\big(f_p (x), g_p (x)\big)\not= 1$ for any {\it four} primes (not dividing $w$). It is easy to check that $\gcd\big(f_p (x), g_p (x)\big)\not=1$ for $p=2, 3, 5, 7$.
\end{Example}

\section{Other modifications of algorithms}
\label{other algorithms}

The bounds mentioned in Section~\ref{estimating the resultant} can be applied to obtain modifications of Algorithm~\ref{BPN}. Let us outline four ideas, of which only the last two will be written down as algorithms.

For the non-zero polynomials $f(x),g(x) \in \Z[x]$ let us again start by computing  $r = \gcd\big(\cont{f(x)}, \cont{g(x)}\! \big)$ and switching to the primitive parts $f(x)= \pp{f(x)}$ and $g(x)= \pp{g(x)}$, assuming that their leading coefficients $a_0$ and $b_0$ are positive.
Calculate $N_f$, $N_g$ by Corollary~\ref{N_f corollary}, $N_{f,g}$ by Corollary~\ref{N_fg corollary} and $A_{f,g}$ by (\ref{estimate resultant bound}). Find the maximal $k$ for which $p_{k}\# \le A_{f,g}$. Then take any $k+1$ primes $p_1,\ldots,p_{k+1}$ each greater than $2 \cdot N_{f,g}$.
We do not know $d(x)$, but we are aware that the number of prime divisors of $R=\res{f(x)/d(x),g(x)/d(x)}$ is less than equal to $k$. So  at least one of the primes  $p_1,\ldots,p_{k+1}$ is not dividing $R$. To find it compute the degrees of $e_{p_i}(x)$ for all $i=1,\ldots, k+1$. Take any $p_i$, for which $\deg{e_{p_i}(x)}$ is the minimal (in case there are more than one $p_i$'s with this property, take one of them, preferably, the smallest of all).

By our construction, $\deg{e_{p_i}(x)} = \deg \gcd\big(f(x), g(x)\big)$ holds. So we can proceed to the next steps: choose a $t$, such that $\lc{t \cdot e_{p_i}(x)} = w=\gcd(a_0, b_0)$; then find by Algorithm~\ref{Alg reconstruct} the pre-image $k(x)$ of $t \cdot e_{p_i}(x)$; then proceed to its primitive part $d(x) = \pp{k(x)}$; and then output the final answer as $r \cdot d(x)$.

The advantage of this approach is that we do not have to go via the steps 14--18 of Algorithm~\ref{BPN} for more than one prime $p$. Also, we do not have to take care of the variable $D$. But the disadvantage is that we have to compute $e_{p_i}(x)$ for large primes for $k+1$ times (whereas in Algorithm~\ref{BPN} the correct answer could be discovered after consideration of fewer primes).
Clearly, the disadvantage is a serious obstacle, since repetitions for $k+1$ large primes consumes more labour than the steps 14--18 of Algorithm~\ref{BPN}. So this is just a theoretical idea, not an approach for an effective algorithm.

\vskip5mm
The disadvantage can be reduced in the following way: in previous arguments, after we find $p_{k}\#$ and $k$, select the prime numbers $p_1,\ldots,p_{k+1}$ each satisfying the condition $p_{i} \nmid w$. This is a much weaker condition than the condition $p_{i} > 2 \cdot N_{f,g}$ used above, so we will surely get smaller primes. Take $M$  to be the minimum of all degrees $\deg{e_{p_i}(x)}$ for all $i=1,\ldots, k+1$. 
Since none of the primes $p_1,\ldots,p_{k+1}$ divides $w$, for any $i=1,\ldots,k+1$ we have $\deg{e_{p_i}(x)} \ge \deg{\gcd\big(f(x), g(x)\big)}$.
On the other hand, since at least one of the primes $p_1,\ldots,p_{k+1}$ does not divide $R$, we know that for that $p_i$ the degree of $e_{p_i}(x)$ is equal to $\deg{\gcd\big(f(x), g(x)\big)}$. Combining these we get that  $\deg{\gcd\big(f(x), g(x)\big)} = M$. 
Since we know $M$, we can take a prime $p  > 2 \cdot N_{f,g}$, compute the $e_{p}(x)$ and check its degree: if $\deg{e_{p}(x)}\not= M$, then we have a wrong $p$ (we no longer need go the steps 14--18 of Algorithm~\ref{BPN} to discover that). Then choose a new value for $p$ and repeat the step. And if  $\deg{e_{p}(x)}= M$, then we have the right $p$. We calculate $t$, the pre-image $k(x)$ of $t \cdot e_{p_i}(x)$, then $d(x) = \pp{k(x)}$, and output the answer $r \cdot d(x)$ (see Algorithm~\ref{BPN+} for a better version of this idea).

\vskip5mm
The third modification, not depending on $A_{f,g}$ can be constructed by estimating $\deg \gcd\big(f(x), g(x)\big) = \deg d(x)$ by means of an auxiliary prime number $q$. By Landau-Mignotte Theorem~\ref{Landau Mignotte}, if $h(x)=c_0 x^k+\cdots +c_k$ is any divisor of the polynomials $f(x)=a_0 x^n+\cdots +a_n$ and $g(x)=b_0 x^m+ \cdots +b_m$
then 
$
| c_i | \le 2^{k} |c_0 / a_0| \lVert f(x) \rVert
$
and
$
| c_i | \le 2^{k} |c_0 / b_0| \lVert g(x) \rVert
$.
Since $|c_0 / a_0|$ is bounded by $|\gcd(a_0,b_0)/ a_0|$ and $|c_0 / b_0|$ is bounded by $|\gcd(a_0,b_0)/ b_0|$, we get the following analog of (\ref{N_fg}):
\begin{equation}
\label{common bound for k}
| c_i | \le 
 2^{k} \cdot \gcd(a_0, b_0) \cdot
\min
\left\{
{{\lVert f(x) \rVert}\over{|a_0|}},
{{\lVert g(x) \rVert}\over{|b_0|}}
\right\}.
\end{equation}

Now assume $q$ is a prime not dividing $w$, and denote $s(q,f,g) = \deg \gcd\big(f_q (x),g_q (x)\big)$. By Corollary~\ref{resultant fold p},
$\deg d(x) \le s(q,f,g)$. We get for the coefficients of $d(x)$ the following bound : $| c_i | \le M_{q,f,g}$, where 
\begin{equation}
\label{Mqfg}
M_{q,f,g}=
 2^{s(q,f,g)} \cdot \gcd(a_0, b_0) \cdot
\min
\left\{
{{\lVert f(x) \rVert}\over{|a_0|}},
{{\lVert g(x) \rVert}\over{|b_0|}}
\right\}.
\end{equation}
$M_{q,f,g}$ is a better bound for the coefficients of $d(x)$ because $2^{s(q,f,g)}$ may be considerably less than $2^{\min\{n,m \}}$\!.

We can improve Algorithm~\ref{BPN}, if we preliminarily find $s(q,f,g)$ by calculating the $\gcd\big(f_q (x),g_q (x)\big)$ for an ``auxiliary'' prime $q \nmid w$, and then chose the ``main'' prime $p$ by the rule $p > 2 \cdot M_{q,f,g}$ (instead of $p > 2 \cdot N_{f,g}$). Observe that if $p > 2 \cdot M_{q,f,g}$ then also $q \nmid w = \gcd(a_0, b_0)$ because $\min
\left\{
{{\lVert f(x) \rVert}\over{|a_0|}},
{{\lVert g(x) \rVert}\over{|b_0|}}
\right\} \ge 1$.
Additionally, we can introduce the variable $D$ to store those values $\deg \gcd\big(f_p (x),g_p (x)\big)$ that we know are greater than $\deg d(x)$. We get the following algorithm:

\begin{Algorithm}[Big prime modular $\gcd$ algorithm with a preliminary estimate on divisor degree]
\label{BPN-e}
{\it $\phantom{.}$
\\
Input: non-zero polynomials $f(x), g(x) \in \Z[x]$.\\
Calculate their greatest common divisor $\gcd\big(f(x), g(x)\big)\in \Z[x]$.
}

{\parskip=1mm

\noindent
{\parskip=0mm
01. Calculate $\cont{f(x)}$, $\cont{g(x)}$ in Euclidean domain $\Z$, choose their signs 

\hskip3mm
so that $\sgn{\cont{f(x)}} = \sgn{\lc{f(x)}}$
and $\sgn{\cont{g(x)}} = \sgn{\lc{g(x)}}$.
}

\noindent
02. Set $f(x)= \pp{f(x)}$ and $g(x)= \pp{g(x)}$.

\noindent
03. Calculate $r$ in Euclidean domain $\Z$ by (\ref{r}).

\noindent
{\parskip=0mm
04. Set $a_0= \lc{f(x)}$ and $b_0= \lc{g(x)}$ (they are positive by our selection of signs 

\hskip3mm
for $\cont{f(x)}$ and $\cont{g(x)}$).
}

\noindent
05. Calculate the positive $w=\gcd(a_0,b_0)$ in Euclidean domain $\Z$.

\noindent
06. Set $D = \min\{\deg f(x), \deg g(x)\}+1$.

\noindent
07. Choose a prime number $q \nmid w$.

\noindent
08. Apply the reduction $\varphi_q$ to calculate the modular images $f_q (x), g_q (x) \in \Z_q [x]$.

\noindent
09. Calculate $e_q(x) = \gcd\big(f_q (x), g_q (x)\big)$ in the Euclidean domain $\Z_q[x]$.

\noindent
10. Set $s(q,f,g) = \deg{e_q(x)}$.

\noindent
11. Calculate $M_{q,f,g}$ by  (\ref{Mqfg}) using the value of $s(q,f,g)$.

\noindent
12. Choose a new prime number $p > 2 \cdot M_{q,f,g}$.

\noindent
13. Apply the reduction $\varphi_p$ to calculate the modular images $f_p (x), g_p (x) \in \Z_p [x]$.

\noindent
14. Calculate $e_p(x) = \gcd\big(f_p (x), g_p (x)\big)$ in Euclidean domain $\Z_p[x]$.

\noindent
15. If $D \le \deg e_p(x)$

\noindent
16. \hskip5mm
go to step 12. 

\noindent
17. else

\noindent
18.  \hskip5mm
choose a $t$ such that the $\lc{t \cdot e_p(x)} = w$;

\noindent
19.  \hskip5mm
call Algorithm~\ref{Alg reconstruct} to calculate the preimage $k(x)$ of $t \cdot e_p(x)$;

\noindent
20.  \hskip5mm
calculate $\cont {k(x)}$ in Euclidean domain $\Z$;

\noindent
21.  \hskip5mm
set $d(x)= \pp{k(x)} = k(x)/ \cont {k(x)}$;

\noindent
22. \hskip13mm 
if $d(x) | f(x)$ and $d(x) | g(x)$

\noindent
23. \hskip22mm 
go to step 27;

\noindent
24. \hskip13mm 
else

\noindent
25. \hskip22mm 
set $D = \deg e_p(x)$;

\noindent
26. \hskip22mm 
go to step 12.

\noindent
27. Output the result: $\gcd\big(f(x), g(x)\big) = r \cdot d(x)$.

}
\end{Algorithm}

\begin{Example}
\label{apply modified on Knuth}
Let us apply Algorithm~\ref{BPN-e} to polynomials (\ref{Knuth}) from Knuth's example. Since $w = 1$, take $q=2$. We have already computed in Example~\ref{Knuth for p=2} that $e_2(x) = \gcd\big(f_2 (x), g_2 (x)\big)=x^2+x+1$. Then $s(2,f,g) = \deg e_2(x)=2$ and 
$$
M_{2,f,g}=
 2^2 \cdot 1 \cdot
\min
\left\{
{{\sqrt{113}}\over{1}},
{{\sqrt{570}}\over{3}}
\right\}<31.84. 
$$
Take $p=67 > 2 \cdot M_{2,f,g}$.
It is easy to calculate that $\gcd\big(f_{67} (x), g_{67} (x)\big) \approx 1$. Compare this with Example~\ref{apply BPA on Knuth}, where we had to use much larger prime $p=1031$.
Moreover, if we take as an auxiliary $q$, say, $q=3$, then $s(3,f,g) = \deg e_3(x)=0$ and 
$
M_{3,f,g}\le 15.92. 
$
So we can take an even smaller prime $p=37 > 2 \cdot M_{3,f,g}$.
\end{Example}

\vskip4mm
The ideas of  Algorithm~\ref{coprime algoritm} and of Algorithm~\ref{BPN-e} can be combined to work with more than one auxiliary prime $q$. 
Like we mentioned in Remark~\ref{Knuth sais}, Knuth in~\cite{Knuth 2} recommends to first check if the polynomials  $f(x), g(x) \in \Z[x]$ are coprime, and to proceed to their $\gcd$ calculation only after we get that they are not coprime (this is motivated by probabilistic arguments).
Compute $A_{f,g}$ by formula (\ref{estimate resultant bound}) and find a $k$ like we did it in step 09 of  Algorithm~\ref{coprime algoritm}: $k$ is the maximal number for which  $p_k\# \le A_{f,g}$. Then choose any $k+1$  primes $q_1,\ldots , q_{k+1}$ not dividing $w$, and start computing the modular $\gcd$'s $e_{q_1}(x), e_{q_2}(x), \ldots$ ($k+1$ times). If at some step we find $\deg{e_{q_i}(x)} = 0$, then we are done:
the polynomials $f(x), g(x)$ are coprime if $r \approx 1$,
or their $\gcd$ is the non-trivial scalar $r \not\approx 1$.
And if $\deg{e_{q_i}(x)}> 0$ for all $q_i$, then we know that:
\begin{enumerate}
  \item these polynomials are not coprime, and
  \item the positive degree of $\gcd \big( f(x), g(x) \big)$ is the minimum
  \begin{equation}
\label{minimum of q}
s(f,g) = \min
\left\{
\deg{e_{q_1}(x)},  \ldots , \deg{e_{q_{k+1}}(x)}
\right\}>0.
\end{equation}
\end{enumerate}
This exact value of $s(f,g)  = \deg \gcd \big( f(x), g(x) \big)$ is a better result than the estimate $s(q,f,g)$ obtained earlier by just one $q$.

Like above, we can assume $f(x), g(x)$ to be primitive (if not, we can again denote $r=\gcd\big( \cont{f(x)}, \cont{g(x)} \!\big)$ and switch to the primitive parts $f(x) = \pp{f(x)}$ and $g(x) = \pp{g(x)}$).
Applying the Landau-Mignotte Theorem~\ref{Landau Mignotte} for the coefficients $c_i$ of $\gcd \big( f(x), g(x) \big)$, we get that $|c_i| \le M_{f,g}$, where
\begin{equation}
\label{Mfq}
M_{f,g}=
 2^{s(f,g)} \cdot \gcd(a_0, b_0) \cdot
\min
\left\{
{{\lVert f(x) \rVert}\over{|a_0|}},
{{\lVert g(x) \rVert}\over{|b_0|}}
\right\}.
\end{equation}

Now we can take a $p > 2 \cdot M_{f,g}$ and by the Euclidean algorithm calculate $e_{p}(x)$ in $\Z_p[x]$. 
If $\deg{e_{p}(x)} > s(f,g)$, 
we drop this $p$ and choose another prime $p > 2 \cdot M_{f,g}$. 
And if  $\deg{e_{p}(x)} = s(f,g)$, then we proceed to the final steps: we choose the $t$,  then get the preimage $k(x)$ of $t \cdot e_{p}(x)$, then go to the primitive part $d(x) = \pp{k(x)}$ and output the final answer as $\gcd \big( f(x), g(x) \big) = r \cdot \pp{k(x)}$.

\begin{Remark} 
This approach has the following advantages:
Firstly, the bound on primes $p$ is better than formula (\ref{Mqfg}) since here we have not $2^{s(q,f,g)}$ but $2^{s(f,g)}$.
Secondly, we no longer need calculate the number $t$, the preimage $k(x)$ and the primitive part $d(x)$ for {\it more than one} prime $p$. Because, if the selected $p > 2 \cdot M_{f,g}$ is not appropriate, we already have an indicator of that: $\deg{e_{p}(x)} > s(f,g)$.
\end{Remark}

We built the following algorithm:

\begin{Algorithm}[Big prime modular $\gcd$ algorithm with preliminary estimates on divisor degrees by multiply primes]
\label{BPN+}
{\it $\phantom{.}$
\\
Input: non-zero polynomials $f(x), g(x) \in \Z[x]$.\\
Calculate their greatest common divisor $\gcd\big(f(x), g(x)\big)\in \Z[x]$.
}

{\parskip=1mm

\noindent

\noindent
{\parskip=0mm
01. Calculate $\cont{f(x)}$, $\cont{g(x)}$ in Euclidean domain $\Z$, choose their signs 

\hskip3mm
so that $\sgn{\cont{f(x)}} = \sgn{\lc{f(x)}}$
and $\sgn{\cont{g(x)}} = \sgn{\lc{g(x)}}$.
}

\noindent
02. Set $f(x)= \pp{f(x)}$ and $g(x)= \pp{g(x)}$.

\noindent
03. Compute the bound $A_{f,g}$ for polynomials $f(x), g(x)$ by  (\ref{estimate resultant bound}).

\noindent
04. Find the maximal $k$ for which $p_k\# \le A_{f,g}$.

\noindent
05. Calculate $r$ in Euclidean domain $\Z$ by (\ref{r}).

\noindent
{\parskip=0mm
06. Set $a_0= \lc{f(x)}$ and $b_0= \lc{g(x)}$ (they are positive by our selection of signs 

\hskip3mm
for $\cont{f(x)}$ and $\cont{g(x)}$).
}

\noindent
07. Calculate the positive $w=\gcd(a_0,b_0)$ in Euclidean domain $\Z$.

\noindent
08. Set $s(f,g) = \min\{\deg{f(x)},\deg{g(x)}\}$.

\noindent
09. Set $i=1$.

\noindent
10. While $i \not= k+1$ 

\noindent
11. \hskip5mm 
choose a new prime $q_i \nmid w$;

\noindent
12. \hskip5mm 
apply the reduction $\varphi_{q_i}$ to calculate the modular images $f_{q_i} (x), g_{q_i} (x) \in \Z_{q_i} [x]$;

\noindent
13. \hskip5mm 
calculate $e_{q_i} = \gcd\big(f_{q_i} (x), g_{q_i} (x)\big)$ in Euclidean domain $\Z_p[x]$;

\noindent
14. \hskip5mm 
if $\deg e_{q_i} \le s(f,g)$

\noindent
15. \hskip13mm 
set $s(f,g) = \deg e_{q_i}$;

\noindent
16. \hskip13mm 
if $\deg e_{q_i} = 0$

\noindent
17. \hskip22mm 
set $d(x)=1$;

\noindent
18. \hskip22mm 
go to step 32;

\noindent
19. \hskip5mm
set $i=i+1$.

\noindent
20. Calculate $M_{f,g}$ by  (\ref{Mfq}) using the value of $s(f,g)$.

\noindent
21. Choose a new prime number $p > 2 \cdot M_{f,g}$.

\noindent
22. Apply the reduction $\varphi_p$ to calculate the modular images $f_p (x), g_p (x) \in \Z_p [x]$.

\noindent
23. Calculate $e_p(x) = \gcd\big(f_p (x), g_p (x)\big)$ in Euclidean domain $\Z_p[x]$.

\noindent
24. If $\deg e_{p} = s(f,g)$

\noindent
25.  \hskip5mm
choose a $t$ such that the $\lc{t \cdot e_p(x)} = w$;

\noindent
26.  \hskip5mm
call Algorithm~\ref{Alg reconstruct} to calculate the preimage $k(x)$ of $t \cdot e_p(x)$;

\noindent
27.  \hskip5mm
calculate $\cont {k(x)}$ in Euclidean domain $\Z$;

\noindent
28.  \hskip5mm
set $d(x)= \pp{k(x)} = k(x)/ \cont {k(x)}$;

\noindent
29. \hskip5mm
go to step 32;

\noindent
30. else

\noindent
31. \hskip5mm
go to step 21.

\noindent
32. Output the result: $\gcd\big(f(x), g(x)\big) = r \cdot d(x)$.

}
\end{Algorithm}

\begin{Example}
\label{apply plus on Knuth}
Let us apply Algorithm~\ref{BPN+} again on polynomials of Knuth's example~(\ref{Knuth}).
As we saw in Example~\ref{enough is 31}, $k=30$. So we may have to consider at most $31$ auxiliary primes $q_i$. But we in fact need just two of them, because $\deg\gcd\big(f_2(x), g_2(x) \big) = \deg(x^2 +x +1) = 2$ and $\deg\gcd\big(f_3(x), g_3(x) \big) = \deg(1) = 0$ (see Example~\ref{Knuth for p=2}). So in Algorithm~\ref{BPN+} we jump from step 16 to step 32 directly.
\end{Example}

{\footnotesize
\vskip2mm \noindent  Vahagn H. Mikaelian:\\ Informatics and Applied Mathematics Department\\ Yerevan State University\\ Yerevan 0025, Armenia.\\ E-mail: v.mikaelian@gmail.com
}

\end{document}